\documentclass{article}
\usepackage{amsmath}
\usepackage{amssymb}
\usepackage{amsthm}
\usepackage{graphicx}
\usepackage{mathrsfs}

\newtheorem{theorem}{Theorem}[section]

\newtheorem{proposition}[theorem]{Proposition}
\newtheorem{definition}[theorem]{Definition}

\newtheorem{remark}[theorem]{Remark}

\newcommand{\EE}{{\mathbb E}}
\newcommand{\ZZ}{{\mathbb Z}}
\newcommand{\NN}{{\mathbb N}}
\newcommand{\PP}{{\mathbb P}}

\newcommand{\RR}{{\mathbb R}}
\newcommand{\Sets}{{\mathbb S}}
\newcommand{\T}{{\cal T}}
\newcommand{\aP}{{\bf P}}
\newcommand{\aE}{{\bf E}}
\newcommand{\A}{{\cal A}}
\newcommand{\B}{{\cal B}}
\newcommand{\Li}{{L}}

\newcommand{\K}{{\cal K}}

\newcommand{\C}{{\cal C}}

\newcommand{\J}{{\cal J}}

\newcommand{\X}{{\cal X}}

\newcommand{\aS}{{\cal S}}
\newcommand{\aV}{{\cal V}}
\newcommand{\wX}{{\widehat{X}}}
\newcommand{\wT}{{\widehat{T}}}
\newcommand{\wpi}{{\widehat{\pi}}}

\newcommand{\ind}{{\bf 1}}

\begin{document}

\title{\itshape A distribution weighting a set of laws whose 
initial states are grouped into classes}
\author{Servet Martinez}

\maketitle

\begin{abstract}

Let $I$ be a finite alphabet and $\aS\subset I$ be a 
nonempty strict subset. The sequences in $I^\ZZ$
are organized into connected regions which always start 
with a symbol in $\aS$. The regions are labelled by types $C(s)$, 
thus a region starting at $s'\in C(s)$ has the same type as 
one starting at $s$. 
Let $(\aP_s: s\in \aS)$ be a family of distributions 
on $I^\NN$ where each $\aP_s$ charges sequences 
starting with the symbol $s$. We can define a
natural distribution $\PP$ on $I^\NN$, that counts the number of
visits to the states from $\aP_s$, properly weighted.
A dynamics of interest is such that at the
first occurrence of $s'\in \aS\setminus C(s)$ 
the law regenerates with distribution $\aP_{s'}$. In this case
we are able to find simple conditions for 
$\PP$ to be stationary. In addition, we study the following more 
complex model: once a symbol $s'\in \aS\setminus C(s)$ has been encountered,
there is a decision to be made, either a new region 
of type $C(s')$ governed by $\aP_{s'}$ starts 
or the region continues to be a $C(s)$ region. This decision 
is modeled as random and depends on $s'$. In this setting 
a similar distribution to $\PP$ can be constructed and the 
conditions for stationarity are supplied.
These models are inspired by genomic sequences where $I$ is the 
set of codons, the classes $(C(s): s\in \aS)$ group 
codons defining similar genomic classes, e.g. in bacteria 
there are two classes corresponding to the start and stop 
codons, and the random decision to continue a region or to begin a new region of a different class reflects the well-known fact that not every appearance of a start codon marks the beginning of a new coding region.


\end{abstract}

\bigskip

\noindent {\bf Keywords:} Markov chains; Kac's Measure; Genomics; 
Regeneration; Renewal Theorem.

\medskip

\noindent {\bf AMS Subject Classification}: 60J10, 60J20, 92D10, 92D20.

\section{Introduction}
\label{Sec1}

Here we give an abstract description of the linear organization of sequences
into different types of regions whose beginnings are marked by a distinguished
number of symbols. The regions are organized in a sequential 
way, each one starts at some prescribed set of symbols and ends 
at some other fixed set which is also the initial symbols 
of a region of a different type. In bacterial genomes there are 
two types of regions: genic and intergenic. Start codons mark the site 
where translation into a polypeptide sequence begins and stop codons 
define where the translation ends. So, stop codons define the 
beginnings of intergenic regions. In our model, 
we assume that there could be an arbitrary number of types.

\medskip

Let $I$ be an alphabet of symbols. The infinite sequences 
of symbols $I^\NN$ are assumed to be organized into connected 
regions labelled by different types. Let $\aS$ be the subset of 
symbols marking the beginning of a region and assume 
it is partitioned into equivalence classes $(C(s): s\in \aS)$, 
each class defining a different type of region. In genomics 
the alphabet $I$ is the set of $64$ codons which are triplets of
the bases $\{A,C,G,T\}$. The set $\aS$ is constituted by the codons
$\{ATG, GTG, TTG, TAA, TAG, TGG\}$, the first three 
are the starting codons for genic regions and the other three 
are the stopping codons marking their ends, so 
$\{ATG, GTG, TTG\}$ and $\{TAA, TAG, TGG\}$ are the two classes.

\medskip

In Section \ref{sec1'x} we supply the main results of our work, but many of
the concepts and intermediate results are presented in Sections 
\ref{sec1x}, \ref{sec2x} and \ref{secren1}. In fact, the proofs of 
the main results employ similar computations to those 
used in the simpler models introduced in the initial sections.

\medskip

Our work has as input a class of distributions 
$(\aP_s:s\in \aS)$ on $I^\NN$.
The law $\aP_s$ {\it governs a region} starting with $s$
and it is said to be of {\it type } $C(s)$.
In Theorem \ref{cprobx} of  Section \ref{sec1x}, we show that there 
is a natural distribution $\PP$ on $I^\NN$ that allows the 
distributions $(\aP_s: s\in \aS)$ to be mixed. This probability measure depends 
on a vector of positive weights $\pi=(\pi_s: s\in \aS)$ and it
counts the number of visits to the states as in Kac's construction
of the stationary vector in Markov chains 
(see Chapter I in \cite {as} or Chapter $10$ in \cite{mt}).

\medskip

In Section \ref{sec2x} we assume the laws have a regenerative structure.
If we start at  $s\in\aS$, the sequence of letters
evolves with the distribution $\aP_{s}$ until $T^1$
which is the time (or site) when a 
state $s^1\in \aS\setminus C(s)$ is first reached.
We assume the law regenerates at $T^1$, that 
is, at this time the sequence restarts
its evolution with law $\aP_{s^1}$ until time $T^2$ when 
it first reaches $s^2\in \aS\setminus C(s^1)$, and so on. 
The study of stationarity of $\PP$ is made through 
the chain of states $\{s^1, s^2,...\}$ at times $\{T^1, T^2,..\}$. 
In one of our main results, Theorem \ref{statx}, we show that 
$\PP$ is stationary in time if and only if the vector of weights 
$\pi$ is invariant for this chain.

\medskip

We use this result to prove in Theorem \ref{thmren1} of 
Section \ref{secren1} that, when $\pi$ is
invariant and $\{T^1, T^2,..\}$ is aperiodic,  
the probability measure $\PP$ is the 
asymptotic measure of any starting distribution that is
a convex combination of $\aP_s$.

\medskip

We note that stationarity is not a property totally foreign to genomes, in 
fact we show in Section \ref{sec3x} that the well-established 
Chargaff's second parity rule (CSPR) implies stationarity and when 
this law is only assumed to be valid for $k-$mers then
the stationarity holds for cylinders of length 
$k-1$. In genomics, CSPR has been proven to hold in the alphabet of nucleotides 
for $k-$mers of length $k\approx 10$. Then 
stationarity in the alphabet of codons is for length $k\approx 3$.
CSPR was first observed experimentally in
\emph{Bacillus subtilis} \cite{chg2} and confirmed
in sufficiently long sequences for small polymer
chains in \cite{ph}. More recent empirical
studies assessing its validity can be found in \cite{mi},
\cite{hm}, \cite{zh}.

\medskip

In Section \ref{sec1'x} we supply a richer model and give the 
main results of this work. Here, a choice must be made 
at each site where a region of type $C$ encounters a symbol $s'\not\in C$:
either it starts a new region governed by $\aP_{s'}$ or it continues the
former region of type $C$. This decision is modeled by a sequence
of independent random variables in the unit interval and the 
random choice also depends on $s'$.  
The conditions for stationarity of this process are given in 
Theorem \ref{statxR}. In Theorem \ref{thmren2} it is stated 
that the law of the process can be also seen as an asymptotic 
law when starting from an initial weighted distribution.

\medskip

Some of the most relevant works in the statistical analysis 
of DNA sequences have been devoted to describing the
statistical differences between regions of different types. Thus, in
\cite{pb1} and \cite{lk} it is discovered that intergenic sequences have
long-range correlations while 
short-range correlations prevail in genic sequences. An important 
tool constructed in \cite{pb1} was a map from the nucleotide
sequences onto a walk. Then, correlations and other statistical
quantities could be computed in walks and translated to DNA sequences.
These methods were used in \cite{bs} and \cite{bs2} to study stationarity,
where a detailed statistical 
discussion about stationarity or non-stationarity
of genic and intergenic regions can be found, together with an examination of power-type decreasing 
correlation functions. We wish to emphasize that in our model
the laws for the regions $(\aP_s:s\in \aS)$ may have long- or short-range correlations, or neither. They do not need to satisfy any Markovian condition,
there is no need of hidden Markov chains or other kinds of models used in
annotation as in \cite{nb, bp, by} or in references therein. Also, 
these laws do not need to exhibit any kind of
stationarity. We take them as an input to study how they 
can combine and organize together into a unique law that under some 
conditions turns out to be stationary. 

\medskip

We are aware that the models we introduce and study are
far from having the necessary degree of complexity to make it
realistic for describing nucleotide or codon DNA sequences 
in bacterial genomes, but they provide
some insights for their analysis.
Thus, even if the statistical laws of nucleotide or codon sequences 
in genomes are claimed not to be stationary, our results imply that
non-stationarity will not simply arise due to the existence of
two types of regions, genic and intergenic, but from other phenomena
that would contradict our hypotheses.
This could be the case for the regenerative property which is one
of the main ingredients for studying stationarity. 
One might be tempted to think this condition is too strong, but this 
is not so clear because a direct consequence of it, that the sequence of
symbols marking the beginnings of regions is an
homogeneous Markov chain, was shown to hold in annotated bacterial
genomes in recent joint work with A. Hart (\cite{hama}).

\medskip

We point out that there is no intersection, not in any obvious way at least, 
between the probabilistic study carried out in this work 
and the probabilistic studies devoted to genome evolution.
Finally, there is a large bibliography on the statistics of 
codon and nucleotide sequences of bacterial DNA.  Here, we have only cited papers that have a direct relationship to the present study.   For a more complete view of this body of work, the reader is directed to the references contained in those that we have cited.


\section{ A law based on visits}
\label{sec1x}

Our goal is to supply a a global law that mixes, in some natural way, 
distinct probability distributions starting at symbols belonging to a 
defined subset. We will do it similarly to the Kac's 
construction of invariant probability measures for Markov chains. 
In genomics this problem corresponds on how the laws of the genic and 
intergenic regions can be mixed to obtain a global law for the genome. 

\medskip

From now on $I$ denotes a finite alphabet, $\aS$ is 
a nonempty subset of $I$ and its elements are called initial 
symbols of the alphabet. We suppose that $\aS$ is partitioned 
into equivalence classes which define regions of the same type.
We denote by $C(s)$ the class containing $s\in \aS$.

\medskip

Let us introduce some notation and basic concepts.
Every countable set $L$ is endowed with the discrete $\sigma-$field
$\Sets(L)=\{K: K\subseteq L\}$.
We set $\NN=\{0,1,2,..\}$ and $\NN^*=\{1,2,..\}$. 

\medskip

Define $X_n:I^\NN\to I$, $x\to x_n$
to be the $n$-th coordinate function, so $X_n(x)=x_n$ for $x\in I^\NN$.
For each $n\in \NN$,
$$
\B^X_n=\sigma(X_0,..,X_n)
$$
denotes the $\sigma-$field generated by the coordinates $X_0,...,X_n$ and
$$
\B^X_{\infty}=\sigma(X_n: n\in \NN) 
$$
denotes the  $\sigma-$field generated by all the coordinates. 
The product set $I^\NN$ is endowed
with the $\sigma-$field $\B^X_{\infty}$.
For $q\in \NN$, the shift map in $q-$steps of time is 
\begin{equation}
\label{defshift}
\Theta_q:I^\NN\to I^\NN\,, \;\, (\Theta_q x)_n=x_{n+q}
\; \forall n\in \NN\,. 
\end{equation}
The random variables are $\B^X_{\infty}-$measurable functions
$W:I^\NN\to \RR\cup\{-\infty,\infty\}$, so $W(x)$ is the value of 
this variable at $x\in I^\NN$.

\medskip

The set $B\circ \Theta_N^{-1}$ has characteristic
function $\ind_B\circ \Theta_N$ because $x\in B\circ \Theta_N^{-1}$
if and only if $\Theta_N(x)\in B$.

\medskip
 
If $P$ is a probability measure on $(I^\NN,\B^X_{\infty})$ then 
the process $X=(X_n:n\in \NN)$ is said to have distribution $P$. 
When we want to emphasize
the dependence on $P$ we say under (law or distribution) $P$.

\medskip

Let $I^+=\bigcup_{n\in \NN^*} I^n$ be the set of non-empty finite words,
so  $\aS\times I^+=\bigcup_{n\in \NN^*} (\aS\times I^n)$ is the
set of words with length at least two starting with some symbol in $\aS$.

\medskip

Below we use the usual convention $\inf \emptyset = \infty$.  

\medskip

As said $I$ is endowed with the $\sigma$-field 
$\Sets(I)$, and this last class of subsets is endowed with
the $\sigma$-field $\Sets(\Sets(I))$.
Let $\J:I\to  \Sets(I)$, $i\to \J(i)$, be a map. Then,
the function $I^\NN\to \Sets(I)$, $x\to \J(x_0)$ is $\B^X_0-$measurable.
So, $\J(x_0)$ is a random set. 
Let $T_\J$ be the random time to hit $\J$ in the future,
$$
T_\J=\inf\{n> 0: X_n\in \J(X_0)\}\,, \hbox{ so }
T_\J(x)=\inf\{n> 0: x_n\in \J(x_0)\}\,.
$$
It defines the sequence of successive returns to $\J$,
\begin{equation}
\label{succ1}
T^1_\J:=T_\J \hbox{ and } \, 
\forall n\ge 1\,: \quad T^{n+1}_\J=T^{n}_\J+ T_\J\circ \Theta_{T^{n}_\J}\,.
\end{equation}
Here $T^{n}_\J=\infty$ implies $T^{n'}_\J=\infty$ for $n'\ge n$.
Sometimes, the dependence on $X_0$ will be written explicitly 
so we put indistinctly $T_{\J(X_0)}$ or $T_\J$. 

\medskip

Let $(\aP_s: s\in \aS)$ be a family of probability distribution on $I^\NN$. 
Under $\aP_s$ the process $X=(X_n: n\ge 0)$ starts from $s$, so $\aP_s(X_0=s)=1$. 
We denote by $\aE_s$ the expectation defined by $\aP_s$.
We assume the set $\aS\setminus C(s)$ is attained 
in finite time $\aP_s-$a.s.. Hence, when $X_0\in \aS$ we can define the 
random time
$$
T:=T_{\aS\setminus C(X_0)}=\inf\{n>0: X_n\in \aS\setminus C(X_0)\}\,,
$$
which is $\aP_s-$a.s. finite,  
\begin{equation}
\label{imp0}
\forall s\in \aS:\quad \aP_s(T<\infty)=1\,.
\end{equation}
The sequence of successive returns is, 
$$
T^1=T \hbox{ and } 
T^{n+1}=T^{n}+ T\circ \Theta_{T^n} \hbox{ for } n\ge 1\,.
$$
The time $T^{n}$ is called the $n$-th {\it hitting time of a 
different class} and $(T^n:n\in \NN^*)$ is called 
{\it the sequence of hitting times of different classes}.
Note that it is not guaranteed that $T^n$ is finite for $n>1$.
By definition we have 
$$
T^{n+1}<\infty \, \Rightarrow \, C(X_{T^{n+1}})\neq C(X_{T^n}).
$$

A family $(\aP_s: s\in \aS)$ does not determine a common law. 
In the sequel we define a probability measure $\PP$ on $I^\NN$. 
To avoid trivial situations we assume $I\setminus \aS\neq \emptyset$
(as is the case in genomics) because in the contrary 
it will be sufficient 
to weight the laws $(\aP_s: s\in \aS)$ in a simple way.
The probability that  at coordinate $0$ the process 
takes the value $i \in I$, will be obtained by weighting 
the number of visits done to the state $i$ 
by the laws $(\aP_s: s\in \aS)$ previous to hit a region of
a different type. These visits will be weighted with a
strictly positive vector $\pi=(\pi_s: s\in \aS)$,
$\pi_s$ being the weight given to $\aP_s$. 
Even if the distribution we define depends on $\pi$
we shall not explicit it to avoid overburden notation.

\begin{definition}
\label{defmed}
For the family $(\aP_s: s\in \aS)$
and a strictly positive vector $\pi=(\pi_s: s\in \aS)$
we define $\PP$ on $I^\NN$ by:
\begin{eqnarray}
\nonumber
\forall B\in \B^X_\infty:\quad 
\PP(B)&=&\sum\limits_{s\in \aS}\pi_s
\left(\!\sum\limits_{n\ge 0}
\aE_s(\ind_{T> n} \ind_B\circ \Theta_n)\right)\\
\label{def1}
&=&\sum\limits_{s\in \aS}\pi_s
\left(\sum\limits_{n\ge 0}
\aP_s(T> n, B\circ \Theta_n^{-1})\right).
\end{eqnarray}
$\Box$
\end{definition}
\noindent (As usual, in the last expression 
the event $A_1\cap A_2$ is written $(A_1,A_2)$).
Obviously $\PP$ is a measure on $I^\NN$.
Note that for all $(i_l:l=0,..,m)\in I^{m+1}$ we have
\begin{equation}
\label{ca1x}
\PP(X_l\!=\!i_l, l\!=\!0,..,m)=\sum\limits_{s\in \aS}\pi_s 
(\sum\limits_{n\ge 0}
\aP_s(T\!> \!n, X_{l+n}\!=\!i_l, l\!=\!0,..,m)).
\end{equation}
It is useful to develop (\ref{ca1x}) in two different cases. 
For $(i_l:l=0,..,m)\in \aS\times I^m$ we have
\begin{eqnarray}
\nonumber
\PP(X_l=i_l, l\!=\!0,..,m)&=&\sum\limits_{s\in C(i_0)}\!\!\!\!\pi_s
(\sum\limits_{n\ge 0} \aP_s(T\!>\!n\!+\!1, X_{l+n+1}\!=\!i_l, l\!=\!0,..,m))\\
\label{caso1x}
&{}& \;\, + \, \pi_{i_0}\, \aP_{i_0}(X_{l+n}\!=\!i_l, l\!=\!1,..,m).
\end{eqnarray}
For $(i_l: l=0,..,m)\in (I\!\setminus \!\aS)\times I^m$ we get,
\begin{equation}
\label{caso2x}
\PP(X_l\!=\!i_l, l\!=\!0,..,m)=\sum\limits_{s\in \aS}\!\pi_s
(\sum\limits_{n\ge 0}
\aP_s(T\!>\!n\!+\!1,X_{l+n+1}\!=\!i_l, l\!=\!0,..,m)).
\end{equation}

\begin{theorem}
\label{cprobx}
There exists some strictly positive
vector $\pi=(\pi_s: s\in \aS)$ such that the measure
$\PP$ defined by (\ref{ca1x}) is a probability measure
if and only if it is satisfied
\begin{equation}
\label{cexistx}
\forall \, s\in \aS:\;\; \aE_s(T)<\infty \,.
\end{equation}
In this case, the condition on $\pi$:
\begin{equation}
\label{cnormx}
\sum\limits_{s\in \aS}\! \pi_s \aE_s(T)=1\,,
\end{equation}
is necessary and sufficient in order that $\PP$ is 
a probability measure on $I^\NN$. 
\end{theorem}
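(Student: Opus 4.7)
The plan is to reduce the theorem to the single computation of the total mass $\PP(I^\NN)$. As noted just after Definition \ref{defmed}, the set function $\PP$ is automatically a (positive, countably additive) measure on $(I^\NN,\B^X_\infty)$, so proving that it is a probability measure is the same as proving $\PP(I^\NN)=1$.

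First, I would set $B=I^\NN$ in (\ref{def1}). Then $\ind_B\circ\Theta_n\equiv 1$ and each indicator under the expectation collapses, giving
\[
\PP(I^\NN)=\sum_{s\in\aS}\pi_s\sum_{n\ge 0}\aP_s(T>n).
\]
By (\ref{imp0}), under each $\aP_s$ the random time $T$ is a.s.\ a finite nonnegative integer, so the standard tail-sum identity $\aE_s(T)=\sum_{n\ge 0}\aP_s(T>n)$ applies and yields
\[
\PP(I^\NN)=\sum_{s\in\aS}\pi_s\,\aE_s(T).
\]

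From this identity the two directions of the theorem are immediate. For necessity of (\ref{cexistx}): if some strictly positive $\pi$ makes $\PP$ a probability measure, the displayed sum equals $1$, and since every $\pi_s>0$ and every $\aE_s(T)\ge 1$, each $\aE_s(T)$ must be finite. For sufficiency: assume (\ref{cexistx}). Since $T\ge 1$ implies $\aE_s(T)\ge 1>0$, I can simply take, for example, $\pi_s=(|\aS|\,\aE_s(T))^{-1}$, which is strictly positive and, plugged into the formula above, gives $\PP(I^\NN)=1$, i.e., satisfies (\ref{cnormx}). Finally, (\ref{cnormx}) is literally the identity $\PP(I^\NN)=1$, so once (\ref{cexistx}) holds it is exactly the necessary and sufficient normalization condition for $\PP$ to be a probability measure.

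The whole argument is essentially one line once the tail-sum identity is invoked, so there is no real obstacle. The only bookkeeping points to be careful about are that the exchange of the two non-negative series in $s$ and $n$ is justified by Tonelli, and that the tail-sum identity requires $T$ to be $\aP_s$-a.s.\ finite, which is precisely the standing hypothesis (\ref{imp0}).
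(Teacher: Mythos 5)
Your proof is correct, but it takes a genuinely more direct route than the paper. You evaluate the defining formula (\ref{def1}) at $B=I^\NN$, so that $\ind_B\circ\Theta_n\equiv 1$, and conclude $\PP(I^\NN)=\sum_{s\in\aS}\pi_s\sum_{n\ge 0}\aP_s(T>n)=\sum_{s\in\aS}\pi_s\aE_s(T)$ by the tail-sum identity; everything then follows from this single identity, with Tonelli justifying the rearrangements. The paper instead computes the total mass as $\sum_{i\in I}\PP(X_0=i)$, splitting according to whether $X_0\in\aS$ or $X_0\in I\setminus\aS$ and using the two cylinder formulas (\ref{caso1x}) and (\ref{caso2x}): this produces the intermediate expressions (\ref{sum1}) and (\ref{sum2}), in which the visit-count terms $\aE_s(\sum_{n=1}^T\ind_{\{X_n\in C(s)\}})$ and the masses $\sum_s\pi_s$ cancel upon addition, leaving the same total $\sum_s\pi_s\aE_s(T)$. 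What your argument buys is brevity and transparency---the normalization (\ref{cnormx}) is visibly just $\PP(I^\NN)=1$, and your explicit choice $\pi_s=(|\aS|\,\aE_s(T))^{-1}$ (legitimate since $\aS$ is finite and $1\le\aE_s(T)<\infty$ under (\ref{cexistx})) settles sufficiency cleanly. What the paper's longer route buys is a rehearsal of exactly the case-splitting manipulations over $X_{n+1}\in C(s)$ versus $X_{n+1}\notin\aS$ that reappear as the core of the stationarity proof of Theorem \ref{statx}, together with explicit expressions for how the mass distributes between $\{X_0\in\aS\}$ and $\{X_0\in I\setminus\aS\}$. One minor remark: the tail-sum identity $\aE_s(T)=\sum_{n\ge 0}\aP_s(T>n)$ holds for any $\NN\cup\{\infty\}$-valued $T$ (both sides being $+\infty$ when $\aP_s(T=\infty)>0$), so your appeal to (\ref{imp0}) at that step is harmless but not actually needed; this makes your necessity direction airtight without any finiteness assumption.
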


\begin{proof}
We will show that condition (\ref{cnormx}) is equivalent to
$\PP(X_0\in \aS)=1$. 
Let $s_0\in \aS$. From (\ref{caso1x}) we have
\begin{eqnarray*}
\PP(X_0=s_0)&=& \sum\limits_{s\in C(s_0)} \pi_s (\sum\limits_{n\ge 0}
\aP_s(T\!>\!n, X_{n}=s_0))+\pi_{s_0}\\
&=&\sum\limits_{s\in C(s_0)} \pi_s 
\aE_s(\sum_{n=1}^T\ind_{\{X_{n}=s_0\}})+\pi_{s_0}.
\end{eqnarray*}
Hence
\begin{equation}
\label{sum1}
\PP(X_0\in \aS)=
\sum\limits_{s\in \aS} \pi_s 
\aE_s(\sum_{n=1}^T\ind_{\{X_{n}\in C(s)\}})
+\sum\limits_{s\in \aS} \pi_s.
\end{equation}

On the other hand from (\ref{caso2x}) we obtain 
\begin{eqnarray}
\nonumber 
\PP(X_0\in I \!\setminus \!\aS) &=&
\sum\limits_{s\in \aS}\!
\pi_s (\! \sum\limits_{n\ge 0} \!
\aP_s(T>n+1, X_{n+1}\in I \!\setminus \! \aS))\\
\nonumber
&=&\sum\limits_{s\in \aS}\!
\pi_s (\sum\limits_{n\ge 0} \!
(\aP_s(T\!>\!n+1)-\aP_s(T\!>\!n+1, X_{n+1}\in C(s)))\\
\nonumber
&{}&=\sum\limits_{s\in \aS}\! \pi_s
(\aE_s(T)-\aP_s(T<\infty)-\aE_s
(\sum_{n=1}^T \ind_{\{X_n\in C(s)\}}) )\\
\nonumber
&{}&= \sum\limits_{s\in \aS}\!\pi_s 
(\aE_s(T)-\aE_s (\sum_{n=1}^T \ind_{\{X_n\in C(s)\}})) 
- \sum\limits_{s\in \aS}\! \pi_s \\
\label{sum2}
&{}&=\sum\limits_{s\in \aS} \pi_s \aE_s
(\sum_{n=1}^T\ind_{\{X_{n}\!\in I \setminus \aS\}})
-\sum\limits_{s\in \aS} \pi_s \,.
\end{eqnarray}
From (\ref{sum1}) and (\ref{sum2}) we get,
$$
\sum_{i\in I} \PP(X_0=i)=\sum\limits_{s\in \aS}\! \pi_s \aE_s(T)\,.
$$
Hence, condition (\ref{cnormx}) is necessary and
sufficient in order that $\PP$ is a probability measure on $I^\NN$.
So (\ref{cexistx}) is a necessary and sufficient condition in order 
that there exists such a strictly positive vector $\pi$.
\end{proof}

\medskip

Note that relation (\ref{cnormx}), together with $\pi>0$ and
$\aE_s(T)\ge 1$ for $s\in \aS$, 
imply $\sum_{s\in \aS}\pi_s\le 1$. Moreover 
$\sum_{s\in \aS}\pi_s=1$ if and only if $\aE_s(T)=1$ 
for all $s\in \aS$ which is equivalent to $\aP_s(T=1)=1$
for all $s\in \aS$. In this case the dynamics we study further 
will be trivial. So, we can assume $\pi$ is a 
strictly positive and strictly substochastic vector.

\medskip

From now on we assume (\ref{cexistx}) always hold and that $\pi$
satisfies (\ref{cnormx}), so $\PP$ is a probability measure on $I^\NN$.
We denote by $\EE$ its mean expected value.

\begin{remark}
\label{remfl1}
From (\ref{def1}) and by using (\ref{cnormx}), we get formally
\begin{equation}
\label{formal1}
\PP(T<\infty)= \sum_{s\in \aS} \pi_s(\sum_{n\ge 0} \aP_s(T>n, T<\infty))=
 \sum_{s\in \aS} \pi_s \EE_s(T)=1.
\end{equation}
This is formal because the definition of $T$ requires a pointwise construction 
of $\PP$ where the type of region at the initial time is explicitly
known. This will be done in Section \ref{secren1} for 
a class of laws $\aP_s$ that satisfy a regenerative condition 
and for vectors $\pi$ that define a stationary law $\PP$. $\Box$
\end{remark}

 \section{ Regeneration and conditions for stationarity }
\label{sec2x}

Let us introduce some notation and recall some basic notions. 
For a probability measure $P$ on $I^\NN$, $E$ denotes its 
associated expectation and $E(\cdot \, | \, \B')$
the mean expected value operator with respect to a sub-$\sigma$ field
$\B'\subseteq \B$ and $P$. For $i\in I$ 
we denote by $P_i=P(\cdot \, | \, X_0=i)$ 
the conditional distribution to start from $i\in I$ and by
$E_i$ the expectation associated with $P_i$.

\medskip

A random time $T'$ taking values in 
$\NN\cup \{\infty\}$ is a stopping time
with respect to the filtration $(\B^X_n:n\!\in \!\NN
\cup \{\infty\})$
when $\{T'\le n\}\in \B^X_n$ is satisfied for all $n\in \NN$. 
Its associated $\sigma-$field is 
$$
\B^X_{T'}=\{B\in \B:
B\cap \{T'\le n\}\in \B^X_n, \, \forall n\in \NN\}\,.
$$
It is easy to see that for every $\B(X_0)$ measurable random set 
$\J=\J(X_0)$, the return time $T_\J(X_0)=\inf\{n>0: X_n\in \J(X_0)\}$
is a stopping time. So, for $X_0\in \aS$,  
$T=T_{\aS\setminus C(X_0)}$ is a stopping time. From (\ref{imp0}),
the random time $T$ is finite $\aP_s-$a.s. for all $s\in \aS$. 

\medskip

Let us define a regenerative time in a larger sense than in 
\cite{smi}, Section $3.7$ in \cite{sr} or Chapter V in \cite{as}, 
where it is required  that at such a time the process restarts 
independently as a replica of the initial one. We only need 
that at a regenerative time the process starts in an independent 
way, unique requirement set in \cite{ab}. As said in \cite{mt} 
Section $2.4$, at a regenerative time the strict past is forgotten.

\begin{definition}
\label{def1reg}
Let $T'$ be a stopping time with respect to the filtration 
$(\B^X_n:n\in \NN\cup \{\infty\})$. We say that $P$ regenerates at $T'$ 
if for all bounded measurable function $h:I^\NN\to \RR$ we have
\begin{equation}
\label{regdeb1}
E(\ind_{\{T'<\infty\}}h\circ \Theta_{T'} \, | \, \B_{T'})
=\ind_{\{T'<\infty\}} E_{X_{T'}}(h)\,\; P-\hbox{a.s.}.
\end{equation}
$\Box$
\end{definition}

Let us define a new family of probability measures
$(\aP^*_s: s\in \aS)$ from $(\aP_s: s\in \aS)$ by
regeneration at $T$. To fix it we introduce some new notation. 
For a sequence ${\underline i}=(i_l: l=0,..,m)\in \aS\times I^+$
let $(\tau_n({\underline i}): n\ge 0)$ 
be the set of indexes given by
$$
\tau_0({\underline i})=0 \hbox{ and } \; \forall n\ge 1:\;
\tau_n({\underline i})=\inf\{l> \tau_{n-1}: i_l\in \aS\setminus
C(i_{\tau_{n-1}({\underline i})})\}\,.
$$
Let $\chi({\underline i})=\sup\{n\ge 0: 
\tau_{n}({\underline i})<\infty\}$. From definition,
$$
\forall \, n\in \{1,..,\chi({\underline i})\}:\;\; 
C(i_{\tau_n({\underline i})})
\neq C(i_{\tau_{n-1}({\underline i})})\,.
$$
Let us define the laws $(\aP^*_s: s\in \aS)$. 
Take ${\underline i}=(i_l: l=0,..,m)\in I^+$,
note the functions $\tau_k({\underline i})$ 
by $\tau_k$, but in $\chi({\underline i})$
keep the dependence on ${\underline i}$. We set,
\begin{eqnarray}
\nonumber
\aP^*_s(X_l\!=\!i_l, l\!=\!0,..,m)&=&\ind_{\{i_0=s\}}\!\!
\prod_{k=0}^{\chi({\underline i})-1}\!\!\!\!
\aP_{i_{\tau_k}}(X_l=i_{\tau_k+l}, l=1,..,\tau_{k+1}-\tau_k)\\
\label{caso0x}
&{}&\times \aP_{i_{\tau_{\chi({\underline i})}}}
(X_l=i_{\tau_{\chi({\underline i})}+l},l=1,..,m-
\tau_{\chi({\underline i})}).
\end{eqnarray}
An inductive argument on 
$\chi({\underline i})=0,..,m$ shows that $\aP^*_s$ is
well-defined. Note that $\aP^*_s(X_0=s)=1$ for all $s\in \aS$.
From (\ref{imp0}) we find $\aP_s^*(T<\infty)=1$ for all $s\in \aS$.
Moreover, from definition (\ref{caso0x}), we can
apply Borel-Cantelli lemma to get
\begin{equation}
\label{imp2}
\forall s\in \aS,\, \forall n\in \NN^*:\quad \aP^*_s(T^n<\infty)=1\,.
\end{equation}

We denote by $\aE^*_s$ the mean expected value 
associated with $\aP^*_s$. Note that $\aP_s(B\cap \{T\le n\})=
\aP^*_s(B\cap \{T\le n\})$ for all
$B\in \B_T$ and $n\in \NN$. In particular
$\aP^*_s(T>n)=\aP_s(T>n)$, so $\aE^*_s(T)=\aE_s(T)$.

\begin{proposition}
\label{regdeblem}
For or all probability vector $\gamma=(\gamma_s: s\in \aS)$ 
and all $n\in \NN^*$ 
the distribution $\aP^*_\gamma=\sum_{s\in \aS}\gamma_s \aP^*_s$ 
regenerates at $T^n$. In particular for all $s\in \aS$, $\aP^*_s$ 
regenerates at $T$.
\end{proposition}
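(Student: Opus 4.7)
The plan is to verify the defining identity (\ref{regdeb1}) at the stopping time $T^n$ on a generating family of cylinder atoms and cylinder test functions, and then extend by a monotone-class argument. First, since (\ref{imp2}) gives $\aP^*_s(T^n<\infty)=1$ for every $s\in\aS$, we have $\aP^*_\gamma(T^n<\infty)=1$, so the indicators $\ind_{\{T^n<\infty\}}$ in (\ref{regdeb1}) are $\aP^*_\gamma$-a.s.\ equal to $1$ and may be dropped from both sides.

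The main step is a pointwise factorization of the product formula (\ref{caso0x}). The $\sigma$-field $\B^X_{T^n}$ is generated by the atoms $A_{\underline i}=\{X_0=i_0,\dots,X_N=i_N\}$ where $\underline i=(i_0,\dots,i_N)\in\aS\times I^N$ satisfies $\tau_n(\underline i)=N$ (which forces $T^n=N$ on $A_{\underline i}$). Fix such an atom and a cylinder $\{X_l=j_l,\,l=0,\dots,p\}$ with $j_0=i_N$, and form the concatenation $\underline k=(i_0,\dots,i_N,j_1,\dots,j_p)$. A direct inspection of the definition of the indices $\tau_l$ shows $\tau_l(\underline k)=\tau_l(\underline i)$ for $0\le l\le n$, $\tau_{n+l}(\underline k)=N+\tau_l((j_0,\dots,j_p))$ for $l\ge 0$, and $\chi(\underline k)=n+\chi((j_0,\dots,j_p))$. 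Splitting the product in (\ref{caso0x}) at the $n$-th factor then yields
\begin{equation*}
\aP^*_s\!\left(A_{\underline i}\cap\{X_{N+l}=j_l,\,l=0,\dots,p\}\right)=\aP^*_s(A_{\underline i})\,\cdot\,\aP^*_{i_N}\!\left(X_l=j_l,\,l=0,\dots,p\right),
\end{equation*}
which is exactly (\ref{regdeb1}) on cylinders, the post-$T^n$ law $\aP^*_{X_{T^n}}$ being read off from the restart state $X_N=i_N$.

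Having established the cylinder identity for each $\aP^*_s$, the passage to $\aP^*_\gamma$ is essentially free: both sides of (\ref{regdeb1}) are linear in the underlying probability, and the right-hand side $\aE^*_{X_{T^n}}(h)$ depends only on $X_{T^n}$, which is $\B^X_{T^n}$-measurable and insensitive to mixing in the starting distribution. Averaging the cylinder identity against $\gamma$ therefore gives (\ref{regdeb1}) for $\aP^*_\gamma$ on cylinder events and cylinder $h$, and a standard $\pi$-$\lambda$ / monotone-class argument extends it to every $B\in\B^X_{T^n}$ and every bounded measurable $h$. The ``in particular'' clause is the case $n=1$, $\gamma=\delta_s$.

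The only delicate point I anticipate is the combinatorial check that the hitting structure of $\underline k$ splits cleanly at $\tau_n(\underline k)=N$, so that the product in (\ref{caso0x}) factors into a ``past'' piece equal to $\aP^*_s(A_{\underline i})$ and a ``future'' piece equal to $\aP^*_{i_N}(X_l=j_l,\,l=0,\dots,p)$. This is easy because the first $N+1$ entries of $\underline k$ coincide with $\underline i$, so the class-hopping condition $i_{\tau_l}\in\aS\setminus C(i_{\tau_{l-1}})$ used in defining $\tau_l$ propagates without change; nevertheless, writing it out carefully is the one place where attention is needed.
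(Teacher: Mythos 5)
Your proposal is correct and follows essentially the same route as the paper: both rest on the factorization of the product formula (\ref{caso0x}) at the hitting structure, verified on cylinder atoms of $\B^X_{T^n}$ and extended by standard measure-theoretic arguments, with linearity handling the mixture $\aP^*_\gamma$. The only cosmetic difference is that the paper reduces to $n=1$ by induction before invoking the recurrence on (\ref{caso0x}), whereas you split the product directly at the $n$-th factor for general $n$.
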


\begin{proof}
It suffices to show the statement for $\aP^*_\gamma=\aP^*_s$,
that is for an extremal vector $\gamma$. Also by an inductive
argument it suffices to prove the result for $n=1$, that is for $T^1=T$.
Since $\aP^*_s(T<\infty)=1$, we must show the following 
equality holds for $(j_1,...,j_q)\in I^+$,
\begin{equation}
\label{regdeb}
\aE^*_s(\ind_{\{X_{k+T}\!=\!j_k, k=1,..,q\}} \, | \, \B^X_{T})=
\aE^*_{X_{T}}(\ind_{\{X_k=j_k, k=1,..,q\}}) \;\, \aP^*_s-\hbox{a.s.}.
\end{equation}
Let $(i_0,..i_m)\in I^{+}$ be such that $i_0=s$,
$i_l\in C(s)$ for $l=1,..,m-1$ and $i_m\not\in C(s)$.
Let $B_m=\{T=m, X_l=i_l, l=0,...,m\}$.
Then, (\ref{regdeb}) will be shown once we prove the equality
$$
\int_{B_m} \ind_{\{X_{k+m}\!=\!j_k, k=1,..,q\}} d\aP^*_s=
\int_{B_m} \aP^*_{i_m}(X_k=j_k, k=1,..,q)d\aP^*_s\,.
$$
But, this follows straightforwardly from a recurrence argument
on property (\ref{caso0x}).
\end{proof}

From now on we define the distribution $\PP^*$ 
as in Definition \ref{defmed}
but for the family of probability measures 
$(\aP^*_s: s\in \aS)$ instead
of $(\aP_s: s\in \aS)$. It suffices to replace $\aE_s$ by
$\aE_s^*$ in (\ref{def1}). Since $\aE_s(T)=\aE^*_s(T)$ for
$s\in \aS$, the condition (\ref{cexistx}) is the same and $\pi$
must satisfy the same condition (\ref{cnormx}). Thus, $\PP^*$
is a probability measure on $I^\NN$ and we denote by 
$\EE^*$ its mean expected value.

\medskip

Let $\gamma=(\gamma_s: s\in \aS)$ be a probability
vector and
$\aP^*=\sum_{s\in \aS} \gamma_s \aP^*_s$ be the associated distribution
on $I^\NN$, so $\aP^*(X_0\in \aS)=1$. 
From relation (\ref{caso0x}), $\aP^*$ satisfies 
\begin{eqnarray*}
&{}& \aP^*(X_l\!=\!i_l, l\!=\!0,..,m, T\!=\!m, 
X_{m+k}\!=\!j_k, k\!=\!1,..,t)\\
&=& \aP^*(X_l\!=\!i_l, l\!=\!0,..,m, T\!=\!m)
\aP^*_{i_m}( X_{k}\!=\!j_k, 
k\!=\!1,..,t).
\end{eqnarray*}
Consider the following sequence of variables $(\Xi_n: n\ge 0)$ 
taking values on $\aS$,
$$
\Xi_0=X_0 \hbox{ and } \forall n\ge 1: \;\; \Xi_n=X_{T^n}.
$$ 
By (\ref{imp2}) this is a well defined process. 
Under $\aP^*$, and by using Proposition \ref{regdeblem}, 
we get that the sequence $(\Xi_n: n\in \NN)$ is a Markov 
chain taking values in $\aS$ with transition matrix $Q=(q_{s s'}: s,s'\in \aS)$ 
given by $q_{s s'}=\aP_s(X_{T}=s')$ for $s,s'\in \aS$. 
In fact, from Proposition \ref{regdeblem} we have 
$$
\aP^*(\Xi_{k+1}=s_{k+1}\, | \, 
\Xi_k=s_k,..,\Xi_0=s_0)=\aP^*_{s_k}(X_T=s_{k+1}).
$$
Since $\Xi_{k+1}\in \aS\setminus C(\Xi_k)$, we get that
$q_{s s'}>0$ implies $s'\not\in C(s)$. 

\medskip

Recall that a positive vector $\rho=(\rho_s: s\in \aS)$ is 
invariant for $Q$ if it satisfies the set of equalities
$$
\forall s\in \aS:\;\;
\rho_s= \sum\limits_{s'\in \aS} \rho_{s'} q_{s' s}.
$$
There always exists invariant positive vectors, 
moreover, if $Q$ is irreducible 
an invariant positive vector is unique up 
to a multiplicative constant.

\medskip
 
On the other hand, $\PP^*$ is a stationary distribution on $I^\NN$ 
if for all $m\in \NN$ and all $(i_0,..,i_m)\in I^{m+1}$ we have
\begin{equation}
\label{stat1}
\forall t\ge 1:\;\,
\PP^*(X_{k+t}\!=\!i_k, k\!=\!0,..,m)=
\PP^*(X_{k}\!=\!i_k, k\!=\!0,..,m)\,.
\end{equation}
By an inductive argument, (\ref{stat1}) is satisfied once 
it holds for $t=1$, so stationarity is verified
when for all $m\in \NN$ and all $(i_0,..,i_m)\in I^{m+1}$ it holds
\begin{equation}
\label{stat2x}
\PP^*(X_{k}\!=\!i_l, l\!=\!0,..,m)=\sum\limits_{j\in I}
\PP^*(X_0=j, X_{k+1}\!=\!i_k, k\!=\!0,..,m)\,.
\end{equation}

\begin{theorem}
\label{statx}
Assume that the strictly positive vector 
$\pi=(\pi_s: s\in \aS)$ satisfies the condition
(\ref{cnormx}). Then, $\PP^*$ is stationary if and only if
$\pi$ is invariant for $Q$, that is it satisfies
\begin{equation}
\label{prim3x}
\forall s\in \aS:\;\;
\pi_s= \sum\limits_{s'\in \aS} \pi_{s'} q_{s' s}
\, \hbox{ where } q_{s s'}=\aP_s(X_{T}=s').
\end{equation}
\end{theorem}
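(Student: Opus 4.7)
The plan is to plug the defining formula (\ref{def1}) (applied to the family $\aP^*_s$ in place of $\aP_s$) into both sides of the stationarity identity (\ref{stat2x}), cancel a telescoping piece via a re-indexing trick, and recognise what remains as the eigenvalue equation (\ref{prim3x}) by invoking the regenerative property proved in Proposition \ref{regdeblem}.

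Concretely, fix a cylinder $B=\{X_l=i_l,\,l=0,\dots,m\}$ and expand both $\PP^*(B)$ and $\PP^*(B\circ\Theta_1^{-1})$ as double sums over $s\in\aS$ and $n\ge 0$. Shifting $n\mapsto n-1$ in the second and splitting $\{T\ge n\}=\{T>n\}\cup\{T=n\}$, the tail $\sum_{s\in\aS}\pi_s\sum_{n\ge 1}\aP^*_s(T>n,\,X_{l+n}=i_l)$ appears on both sides and cancels, so that stationarity for $B$ is equivalent to
$$\sum_{s\in\aS}\pi_s\,\aP^*_s(X_l=i_l,\,l=0,\dots,m)=\sum_{s\in\aS}\pi_s\,\aE^*_s\bigl[\ind_{\{T<\infty\}}\ind_{\{X_{T+l}=i_l,\,l=0,\dots,m\}}\bigr].$$
Applying Proposition \ref{regdeblem} with $h=\ind_{\{X_l=i_l,\,l=0,\dots,m\}}$, and using that $\aP^*_s(T<\infty)=1$ together with $\aP^*_s(X_T=s')=q_{ss'}$, the right-hand side rewrites as $\sum_{s,s'\in\aS}\pi_s\,q_{ss'}\,\aP^*_{s'}(X_l=i_l,\,l=0,\dots,m)$.

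A brief case analysis on $i_0$ then finishes the proof. If $i_0\notin\aS$ both sides vanish trivially, because $\aP^*_{s'}$ is concentrated on paths with $X_0=s'\in\aS$. If $i_0=s^*\in\aS$ only the $s=s^*$ term contributes on the left and only $s'=s^*$ on the right, so the identity collapses to $(\pi_{s^*}-\sum_{s\in\aS}\pi_s\,q_{ss^*})\,\aP^*_{s^*}(X_l=i_l,\,l=0,\dots,m)=0$. Taking $m=0$ and $i_0=s^*$ makes the marginal factor equal to $1$, so stationarity forces (\ref{prim3x}); conversely, invariance of $\pi$ makes the bracket vanish for every $s^*$, giving stationarity for every cylinder. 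The only step that needs attention is the termwise manipulation of the double series, which is legitimate because $\sum_n\aP^*_s(T>n)=\aE^*_s(T)=\aE_s(T)<\infty$ by the standing hypothesis (\ref{cexistx}); after that everything is a direct application of the regeneration identity and elementary bookkeeping.
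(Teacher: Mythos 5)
Your proposal is correct and follows essentially the same route as the paper: your re-indexing of the defining series and cancellation of the common $\{T>n\}$ tail is a compact repackaging of the paper's manipulations via (\ref{caso1x})--(\ref{caso2x}) and (\ref{sxq})--(\ref{pfi11}), and your identification of the surviving boundary term $\sum_s \pi_s\,\aE^*_s\bigl[\ind_{\{T<\infty\}}\ind_B\circ\Theta_T\bigr]$ through Proposition \ref{regdeblem} is exactly the factorization the paper performs in (\ref{ig1})--(\ref{ig3}), with the same case split on $i_0\in\aS$ versus $i_0\notin\aS$ and the same justification (finiteness of $\aE^*_s(T)$) for the series bookkeeping. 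The only cosmetic difference is that you branch on $i_0$ at the very end rather than from the start, which slightly streamlines but does not change the argument.
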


\begin{proof}
From (\ref{stat2x}) $\PP^*$ is stationary if for all 
$m\in \NN$ and all $(i_0,..,i_m)\in I^{m+1}$ it is satisfied 
\begin{equation}
\label{stattx}
\PP^*(X_{l}\!=\!i_l, l\!=\!0,..,m)=\sum\limits_{j\in I}
\PP^*(X_0=j, X_{l+1}\!=\!i_l, l\!=\!0,..,m)\,.
\end{equation}
Let,
$$
G=\! \{X_{l}\!=\!i_l, l\!=\!0,..,m\}
\hbox{ and } G\circ \Theta_n^{-1}\!=\! \{X_{l+n}\!=\!i_l, l\!=\!0,..,m\} 
\hbox{ be the }n\!-\!\hbox{shifted set}. 
$$
The stationarity condition is $\PP^*(G)=\PP^*(G\circ \Theta_1^{-1})$.
From (\ref{caso2x}) we obtain
\begin{eqnarray*}
&{}& \PP^*(X_0 \!\in \! I\!\setminus\! \aS, G\circ \Theta_1^{-1})
=\sum\limits_{s\in \aS}\sum\limits_{n\ge 0}
\pi_s \aP^*_s(T\!>\!n\!+\!1,
X_{l+n+1}\!\in \! I\setminus \aS, G\circ \Theta_{n+2}^{-1})\\
&=&\sum\limits_{s\in \aS}\sum\limits_{n\ge 0}\!\!
\pi_s (\aP^*_s(T\!>\!n\!+\!1,
G\circ \Theta_{n+2}^{-1})-\aP^*_s(T\!>\!n\!+\!1, X_{n+1}\!\in \!C(s), 
G\!\circ \Theta_{n+2}^{-1})).
\end{eqnarray*}
From (\ref{caso1x}) we get
\begin{eqnarray*}
\sum\limits_{j\in \aS} \PP^*(X_0=j, G\circ \Theta_{1}^{-1})
&=& \sum\limits_{s\in \aS}\!\!\pi_s (\sum\limits_{n\ge 0}
\aP^*_s(T\!>\!n+1, X_{n+1}\!\in \!C(s), G\circ \Theta_{n+2}^{-1}))\\
&{}&
\;\, +\sum\limits_{s\in \aS}\pi_s \aP^*_s(G\circ \Theta_{1}^{-1}).
\end{eqnarray*}
From the last two expressions
we obtain
\begin{eqnarray}
\nonumber
\sum\limits_{j\in I} \PP^*(X_0=j, G\circ \Theta_{1}^{-1})
&=&
\sum\limits_{s\in \aS}\!\pi_s (\sum\limits_{n\ge 0}
\!\aP^*_s(T\!>\!n\!+\!1,G\circ \Theta_{n+2}^{-1}))\!
+ \sum\limits_{s\in \aS}\! \pi_s \aP^*_s(G\circ \Theta_{1}^{-1})\\
\label{sxq}
&{}&=
\sum\limits_{s\in \aS}\pi_s (\sum\limits_{n\ge 0}\aP^*_s(T\!>\!n+1,
G\circ \Theta_{n+1}^{-1}))\,.
\end{eqnarray}
For $i_0\in I\!\setminus \!\aS$, expression (\ref{caso2x}) implies,
\begin{equation}
\label{pfi11}
\PP^*(G)=\sum\limits_{s\in \aS}\pi_s (\sum\limits_{n\ge 0}
\aP^*_s(T\!>\!n+1, G\circ \Theta_{n+1}^{-1}))\,.
\end{equation}
Hence, (\ref{sxq}) and (\ref{pfi11}) show that, with no additional 
hypothesis, the stationary equality (\ref{stattx}) 
is satisfied when
$(i_0,..,i_m)\in (I\!\setminus \!\aS)\times I^+$.

\medskip

Now, when $i_0\in \aS$, from (\ref{sxq}) we obtain,
\begin{eqnarray}
\nonumber 
\sum\limits_{j\in I} \PP^*(X_0=j, G\circ \Theta_1^{-1})
&=&
\!\!\sum\limits_{s\in C(i_0)}\!\!\!\! \pi_s 
(\sum\limits_{n\ge 0}\aP^*_s(T\!>\!n\!+\!1,
G\circ \Theta_{n+1}^{-1})\\
\label{ig1}
&{}&\;\; +\!\!\!\!\sum\limits_{s\in \aS\setminus C(i_0)} \!\!\!\!\!\!\!
\pi_s (\sum\limits_{n\ge 0}
\aP^*_s(T\!=\!n\!+\!1, G\circ \Theta_{n+1}^{-1}))\!.
\end{eqnarray}

Now we use $\aP^*_s(T<\infty)=1$ for all $s\in \aS$ as well
as the definition done in (\ref{caso0x}), to get
\begin{eqnarray}
\nonumber
&{}&\sum\limits_{s\in \aS\setminus C(i_0)}\!\!\!\!\!\!\pi_s 
(\!\sum\limits_{n\ge 0}\aP^*_s(T\!=\!n\!+\!1, 
G\circ \Theta_{n+1}^{-1}))\\
\nonumber
&{}&
\; =(\sum\limits_{s\in \aS\setminus C(i_0)}\!\!\!\!\!\!
\pi_s (\sum\limits_{n\ge 0}\aP^*_s(T\!=\!n+1, X_T=i_0)))
\cdot \aP^*_{i_0}(G)\\
\label{ig2}
&{}& \; =
(\sum\limits_{s\in \aS\setminus C(i_0)}\!\!\!\!\!
\pi_s \aP^*_s (X_T=i_0)) \cdot \aP^*_{i_0}(G).
\end{eqnarray}
On the other hand by using $i_0\in \aS$ 
formula (\ref{caso1x}) gives
\begin{equation}
\label{ig3}
\PP^*(G)=
\sum\limits_{s\in C(i_0)}\!\!\!\!\pi_s (\sum\limits_{n\ge 0}
\aP^*_s(T\!>\!n+1, G\circ \Theta_{n+1}^{-1}))
+ \pi_{i_0} \aP^*_{i_0}(G).
\end{equation}
Therefore, from equalities (\ref{ig1}), (\ref{ig2}) 
and (\ref{ig3}) we deduce that the equality 
(\ref{stattx}) is satisfied 
if and only if the following relation holds
$$
\forall i_0\in \aS:\quad \pi_{i_0}=
\sum\limits_{s\in \aS\setminus C(i_0)}\!\!\!\!
\pi_s \aP^*_s (X_T=i_0)\,.
$$
Since $\aP^*_s (X_T=s')=0$ when $C(s')=C(s)$, we have proven that 
$\PP^*$ is stationary if and only if the following condition
is satisfied
$$
\forall s'\in \aS:\quad \pi_{s'}=\sum\limits_{s\in \aS}\pi_s 
\aP^*_s (X_T=s')=\sum\limits_{s\in \aS}\pi_s q_{s s'}\,.
$$ 
This shows the theorem.
\end{proof}

When $\PP^*$ is stationary we can extend it to the set of bi-infinite sequences
$I^\ZZ$ by putting
\begin{equation}
\label{biinf1}
\PP^*(X_{l+k}=i_k, k=0,..,m)=\PP^*(X_k=i_k, k=0,..,m)
\end{equation}
for all $l\in \ZZ$, $m\ge 0$ and $(i_k:k=0,..,m)\in I^+$. Note that this 
equality obviously holds for $l\in \NN$ because $\PP^*$ is stationary.

\section{ A renewal property of the law}
\label{secren1}
Define the probability vector
$$
\wpi=(\wpi_s: s\in \aS)
\hbox{ with } \wpi_s=\pi_s\, (\sum_{s'\in \aS}\pi_{s'})^{-1}.
$$
Consider the distribution $\aP_\wpi=\sum_{s\in \aS}\wpi_s 
\aP_s$ on $\B^X_\infty$ and let $\aE_\wpi$ be its mean expected value. 
From (\ref{imp2}) we have $\aP_\wpi(T^n<\infty)=1$ for all $n\in \NN^*$, 
being $T=T_{\aS\setminus C(s)}$. By condition 
(\ref{cnormx}) we also find
$$
\aE_\wpi(T)=(\sum_{s\in \aS}\pi_s)^{-1}
(\sum_{s\in \aS}\pi_s \aE_s(T))=
(\sum_{s\in \aS}\pi_s)^{-1}\,.
$$
Let $\aP^*_\wpi$ be given by 
$\aP^*_\wpi=\sum_{s\in \aS}\wpi_s \aP^*_s$ on 
$\B^X_\infty$ and $\aE^*_\wpi$ be its mean 
expected value. By previous relations
\begin{equation}
\label{esp1}
\forall n\in \NN^* \quad \aP^*_\wpi(T^n<\infty)=1 \hbox{ and } 
\aE^*_\wpi(T)^{-1}=\sum_{s\in \aS}\pi_s\,. 
\end{equation}

We will extend the probability measure spaces 
$(I^\NN, \B^X_\infty)$ where the 
probabilities $\aP^*_s$, $\aP^*_\wpi$, 
$\PP^*$ are defined, to include a 
countable number of independent copies of $X$. 
Since this is a simple 
extension, the probability distributions  
on this space will be continue to be noted 
$\aP^*_s$, $\aP^*_\wpi$, $\PP^*$, respectively.

\medskip

Consider the distribution of $T$: 
$\aP^*_\wpi(T=l)$, $l\in \NN^*$. It is
aperiodic if the greatest common divisor of its support 
satisfies 
\begin{equation}
\label{gcd1}
\hbox{g.c.d.}\{l>0: \aP^*_\wpi(T=l)>0\}=1.
\end{equation}

\begin{theorem}
\label{thmren1}
Assume $\pi$ satisfies (\ref{cnormx}), (\ref{prim3x}) 
and the distribution of $T$ satisfies (\ref{gcd1}). Then, 
\begin{equation}
\label{eqw2}
\forall B\in \B^X_\infty:\quad
\PP^*(B)=\lim\limits_{N\to \infty} \aP^*_\wpi(B\circ \Theta_N^{-1}).
\end{equation}
If in addition the matrix $Q$ is aperiodic then for all
probability vector $\gamma=(\gamma_s: s\in \aS)$ the probability
measure $\aP_\gamma^*=\sum_{s\in \aS} \gamma_s \aP^*_s$ satisfies
\begin{equation}
\label{eqw3}
\forall B\in \B^X_\infty:\quad
\PP^*(B)=\lim\limits_{N\to \infty} \aP_\gamma^*(B\circ \Theta_N^{-1}).
\end{equation}
\end{theorem}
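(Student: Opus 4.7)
The plan is to exploit the regenerative structure at the times $T^n$ (Proposition \ref{regdeblem}) to cast both statements as Markov-renewal limit theorems. Under $\aP^*_\wpi$ the embedded chain $(\Xi_n)$ is stationary with marginal $\wpi$ by the invariance (\ref{prim3x}); the inter-arrival distribution $\aP^*_\wpi(T=\cdot)$ is aperiodic by (\ref{gcd1}); and its mean is $\aE^*_\wpi(T)=(\sum_{s'}\pi_{s'})^{-1}$ by (\ref{esp1}). These are precisely the ingredients for a key renewal theorem in which the renewals carry marks in $\aS$, and the limit should coincide with the $\PP^*$-mass assigned to $B$ by Definition \ref{defmed} applied to the starred family.

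First I would set $g_s(m):=\aP^*_s(T>m,\, B\circ\Theta_m^{-1})$ for $s\in\aS$ and $m\ge 0$, so that Definition \ref{defmed} gives $\PP^*(B)=\sum_{s\in\aS}\pi_s\sum_{m\ge 0}g_s(m)$, with $\sum_m g_s(m)\le\aE^*_s(T)<\infty$ by (\ref{cexistx}). Then, with the conventions $T^0:=0$ and $\Xi_0:=X_0$, I would partition on the last regeneration time $T^n\le N$ and apply Proposition \ref{regdeblem} at that $T^n$ to obtain
$$\aP^*_\wpi(B\circ\Theta_N^{-1})=\sum_{m=0}^{N}\sum_{s\in\aS}U_\wpi(N-m,s)\,g_s(m),$$
where $U_\wpi(k,s):=\sum_{n\ge 0}\aP^*_\wpi(T^n=k,\Xi_n=s)$ is the Markov renewal measure. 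Since $T\ge 1$ forces the $T^n$ to be strictly increasing, at most one $n$ contributes to any given $k$, giving the uniform bound $U_\wpi(k,s)\le 1$.

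The decisive step is to prove $U_\wpi(k,s)\to\pi_s$ as $k\to\infty$. My plan is to reduce to a scalar Blackwell renewal theorem by fixing a state $s_\star\in\aS$ and splitting the trajectory into i.i.d.\ excursions of $(\Xi_n)$ between successive visits to $s_\star$; the total duration of such an excursion is aperiodic thanks to (\ref{gcd1}). Blackwell's theorem combined with the stationarity of $\wpi$ for $Q$ then yields $U_\wpi(k,s)\to\wpi_s/\aE^*_\wpi(T)=\pi_s$. Substituting into the convolution formula above and invoking dominated convergence (using $U_\wpi(\cdot,s)\le 1$ and the summability of $g_s$) proves (\ref{eqw2}) on cylinder sets, and a monotone class argument extends it to all $B\in\B^X_\infty$.

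For (\ref{eqw3}), the same decomposition applies with $U_\gamma$ replacing $U_\wpi$; the extra hypothesis that $Q$ is aperiodic ensures that the marginal of $\Xi_n$ under $\aP^*_\gamma$ converges to $\wpi$, so the transient dependence on the initial distribution $\gamma$ is forgotten and one still obtains $U_\gamma(k,s)\to\pi_s$. The main obstacle I anticipate is the rigorous handling of the renewal theorem in the Markov setting: the inter-arrival times $T^{n+1}-T^n$ are not i.i.d.\ but depend on $\Xi_n$, so classical Blackwell cannot be applied directly to the sequence $(T^n)$ itself — the excursion-based reduction sketched above (or an equivalent invocation of a Markov renewal theorem) is the essential device that turns the Markov structure into a scalar renewal problem and makes the argument work.
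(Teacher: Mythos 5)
Your overall frame is sound and is genuinely different from the paper's: you decompose over the last regeneration before $N$, writing $\aP^*_\wpi(B\circ\Theta_N^{-1})=\sum_{m=0}^{N}\sum_{s\in\aS}U_\wpi(N-m,s)\,g_s(m)$ with $U_\wpi(k,s)\le 1$ and $g_s$ summable, and then aim for a Markov renewal limit $U_\wpi(k,s)\to\pi_s$. The paper instead asserts that, because $\wpi$ is invariant for $Q$, the increments $(T^1,T^{n+1}-T^n)$ are i.i.d.\ under $\aP^*_\wpi$, glues independent copies of $(X_n:n\le T)$ into a process $\wX$ claimed to be equal in law to $X$, and applies the classical scalar renewal theorem. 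Your remark that the increments are modulated by $\Xi_n$ and hence not i.i.d.\ is correct and in fact exposes an error in the paper's own argument: invariance of $\wpi$ gives only equality of the marginal laws of the increments, not independence, and the glued $\wX$ is in general not distributed as $\aP^*_\wpi$ because the gluing forgets that the $(n+1)$-st excursion must start at the terminal state of the $n$-th.

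However, your decisive step fails at exactly the analogous point: you claim that the duration of an $s_\star$-to-$s_\star$ cycle of $(\Xi_n)$ is aperiodic ``thanks to (\ref{gcd1})'', but (\ref{gcd1}) constrains only the mixture law of a single increment, not the lattice generated by cycle sums. Concretely, take $I=\{a,b,c\}$, $\aS=\{a,b\}$ with singleton classes, $\aP^*_a$ the point mass on the periodic sequence $a\,b\,b\,a\,b\,b\cdots$ and $\aP^*_b$ the point mass on $b\,b\,a\,b\,b\,a\cdots$, so that $T\equiv 1$ under $\aP^*_a$ and $T\equiv 2$ under $\aP^*_b$, $q_{ab}=q_{ba}=1$, $\wpi=(\tfrac12,\tfrac12)$, and $\pi=(\tfrac13,\tfrac13)$ satisfies (\ref{cnormx}) and (\ref{prim3x}); the law of $T$ under $\aP^*_\wpi$ is uniform on $\{1,2\}$, so (\ref{gcd1}) holds. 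Yet every $a\to a$ cycle has duration exactly $3$, $U_\wpi(k,a)=\tfrac12\ind_{\{k\equiv 0 \,(\mathrm{mod}\, 3)\}}+\tfrac12\ind_{\{k\equiv 2\,(\mathrm{mod}\, 3)\}}$ does not converge to $\pi_a=\tfrac13$, and indeed $\aP^*_\wpi(X_N=a)$ cycles through $\tfrac12,0,\tfrac12$ while $\PP^*(X_0=a)=\tfrac13$: the conclusion (\ref{eqw2}) itself fails under the stated hypotheses. So the gap is not patchable within your sketch (nor within the paper's proof, which breaks on the same example); one must strengthen (\ref{gcd1}) to aperiodicity of the Markov renewal kernel, e.g.\ $\gcd\{l>0: \aP^*_{s_\star}(T^n=l,\,X_{T^n}=s_\star)>0 \hbox{ for some } n\in\NN^*\}=1$, and with that hypothesis your excursion reduction does run (a similar lattice construction with three classes shows that adding aperiodicity of $Q$ does not by itself rescue (\ref{eqw3}) either). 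Two lesser points: your cycle construction needs visits to $s_\star$ to be a.s.\ infinite, which requires care if $Q$ is not irreducible; and your closing monotone-class appeal is both unnecessary and invalid (limit identities do not pass through monotone class arguments), but harmlessly so, since your convolution formula with $g_s(m)=\aP^*_s(T>m,\,B\circ\Theta_m^{-1})\le\aP^*_s(T>m)$ already treats arbitrary $B\in\B^X_\infty$ directly.
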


\begin{proof}
Let us first prove the statement (\ref{eqw2}).
It is sufficient to show the equality
for $B=(X_k=i_k, k=0,..,m)$ with $(i_k: k=0,..,m)\in I^+$.

\medskip

Since $\wpi$ is invariant for the stochastic matrix $Q$,
\begin{equation}
\label{eqw}
\forall s\in \aS:\quad  \wpi_s=\aP^*_\wpi(X_{T}=s).
\end{equation}

From Proposition \ref{regdeblem} 
the probability distribution $\aP^*_\wpi$ regenerates 
at times $(T^n: n\in \NN^*)$. An inductive argument on (\ref{eqw}) gives 
$\wpi_s=\aP^*_\wpi(X_{T^n}=s)$, so $X_{T^n}$ is distributed
as $\wpi$. Then, by using (\ref{regdeb1}), for all bounded 
and measurable $h:I^\NN\to \RR$ we have
\begin{equation}
\label{rex1}
\aE^*_\wpi(h\circ \Theta_{T_n} \,| \, \B_{T^n})=
\aE^*_\wpi(h) \quad \aP^*_\wpi-\hbox{a.s.}.
\end{equation}
Under $\aP^*_\wpi$, the increments $(T^1, T^{n+1}-T^n: n\in \NN^*)$ 
are independent equally distributed, each increment having the 
same distribution as $T$. For proving  (\ref{eqw2}) it is useful 
to give a renewal construction of $\aP^*_\wpi$. 

\medskip

Consider a sequence $(X^{(m)}: m\in  \NN^*)$ of 
independent copies of the process 
$(X_n: n\le T)$ with distribution $\aP^*_\wpi$, so 
$X^{(m)}=(X^{(m)}_n: n\le T^{[m]})$ where 
$(T^{[m]}: m\in  \NN^*)$ is a
sequence of independent copies of $T$. 
Define the process $\wX=(\wX_n: n\ge 0)$ by
$$
\wX_n=X^{(m)}_{n'} \hbox{ if } n=\sum_{l=1}^{m-1} T^{[l]}+n' \hbox{ and 
} n'\le T^{[m]}. 
$$
Since $\aP^*_\wpi$ regenerate 
at $(T^n: n\in \NN)$ and $X_{T^n}$has distribution $\wpi$, 
we get that $\wX$ and $X$ are equally distributed 
with distribution $\aP^*_\wpi$. 

\medskip

Let $\wT^{n}=\sum_{k=1}^n T^{[m]}$ and $\wT=\wT^1$. Then, 
$(\wT^{n}: n\in \NN^*)$ is the sequence of hitting times of 
different classes of $\wX$ and it has 
the same distribution as $(T^n: n\in \NN^*)$.
By construction $(T^{[m]}: m\in \NN^*)$ are independent identically 
distributed random variables with common distribution 
$\aP^*_\wpi(T^{[m]}=l)=\aP^*_\wpi(\wT=l)$ for $l\in \NN^*$. 
Since these variables have finite mean 
$\aE^*_\wpi(\wT)=(\sum_{s\in \aS}\pi_s)^{-1}$, we can apply 
the renewal theorem, see  Chapter II in \cite{tl}. Define,
$$
\forall\, N>0:\quad \beta_N=\sup\{\wT^n: \wT^n\le N, n\in \NN^*\},
$$
where we put $\beta_N=0$ if $\wT^n> N$ for all $n\in \NN^*$.
Note that the distribution of $\beta_N$ only depends on
the sequence $(\wT^n:n\ge 1)$. By aperiodicity 
of the distribution of $\wT$ the renewal theorem gives: 
\begin{equation}
\label{kx1}
\forall l\ge 0: \;\, \lim\limits_{N\to \infty}
\aP^*_\wpi({\beta_N}\!=\!N\!-\!l)=
E_\wpi(T)^{-1}\aP^*_\wpi(\wT\!>\!l)=
(\sum_{s\in \aS}\pi_s)\aP^*_\wpi(\wT\!>\!l)
\end{equation}
and,
\begin{equation}
\label{dis2}
\lim\limits_{N\to \infty}
\aP^*_\wpi(N\in \{\wT^n: n\in \NN^*\})=
\aE^*_\wpi(\wT)^{-1}=\sum_{s\in \aS}\pi_s.
\end{equation}
We have
$$
\aP^*_\wpi(\wX_{N+k}=i_k, k=0,..,m)=
\sum_{l=0}^N\aP^*_\wpi(\wX_{N+k}=i_k, k=0,..,m, \beta_N=N-l).
$$
Let $\epsilon>0$ and $r>0$. From (\ref{kx1}) we obtain 
\begin{equation}
\label{ench1}
\exists\, N'=N'(\epsilon,r)
\hbox{ such that } \forall \, N\ge N':\;\;
\aP^*_\wpi(\beta_N< N-r)<\epsilon.
\end{equation}
Hence
$$
|\aP^*_\wpi(\wX_{N+k}=i_k, k=0,..,m)-\!\!
\sum_{l=0}^r \aP^*_\wpi(\wX_{N+k}\!=\!i_k, k=0,..,m, 
\beta_N\!=\!N\!-l)|<\epsilon.
$$
By regeneration at times $\{\wT^n: n\in \NN^*\}$ 
(see (\ref{rex1})) we get 
\begin{eqnarray}
\nonumber
&{}&\aP^*_\wpi(\wX_{N+k}=i_k, k=0,..,m; \beta_N=N-l)\\
\nonumber
&{}&=\aP^*_\wpi(\wX_{N+k}\!=\!i_k, k\!=\!0,..,m; 
N\!-\!l\in \{\wT^n: n\!\in \!\NN^*\}, \\
\nonumber
&{}&\quad\quad \;\; (N\!-\!l,N]
\cap \{\wT^n: n\!\in \!\NN^*\}\!=\!\emptyset)\\
\label{ax1}
&{}&=\aP^*_\wpi(\wX_{l+k}=i_k, k=0,..,m,\wT>l)
\aP^*_\wpi(N-l\in \{\wT^n: n\in \NN^*\}).
\end{eqnarray}
From (\ref{dis2}) we get the existence of $N''(r,\epsilon)>N'$ 
such that for all $N>N''$ we have
\begin{equation}
\label{ax2}
\forall \, l\!\in \! \{0,..,r\}: \quad
\big|\aP^*_\wpi(N\!-\!l\in \{\wT^n: n\!\in \!\NN^*\})-
(\sum_{s\in \aS}\pi_s)\big|<\frac{\epsilon}{r}.
\end{equation}
Therefore
\begin{equation}
\label{ax3}
|\aP^*_\wpi(\wX_{N+k}\!=\!i_k, k\!=\!0,..,m)-
\sum_{l=0}^r \!
\aP^*_\wpi(\wX_{l+k}\!=\!i_k, k\!=\!0,..,m;T\!>\!l)
(\sum_{s\in \aS}\pi_s)|\!<\!2\epsilon.
\end{equation}
Since $X$ and $\wX$ are equally distributed we get
\begin{eqnarray}
\nonumber
&{}&
\lim\limits_{N\to \infty}\aP^*_\wpi(X_{N+k}=i_k, k=0,..,m)\\
\nonumber
&{}&=\sum_{l=0}^\infty \aP^*_\wpi(T>l, X_{l+k}=i_k, k=0,..,m)
(\sum_{s\in \aS} \pi_s)\\
\nonumber
&{}&
=\sum_{l=0}^\infty (\sum_{s\in \aS} \pi_s
\aP^*_s(T>l, X_{l+k}=i_k, k=0,..,m))\\
\label{ax4}
&{}& =\PP^*(X_{k}=i_k, k=0,..,m).
\end{eqnarray}
Then, the proof of (\ref{eqw2}) for $B=\{X_k=i_k, k=0,..,m\}$
is finished.

\medskip

Let us now show (\ref{eqw3}). Since $Q$ is aperiodic,  
$\wpi$ is the unique invariant probability measure for $Q$ and 
every probability vector $\nu=(\nu_s: s\in \aS)$ satisfies
\begin{equation}
\label{maap}
\lim\limits_{N\to \infty}\nu' Q^N=\wpi'.
\end{equation}
where we note by $\nu'$ the row vector, transpose of $\nu$.
From Proposition \ref{regdeblem}
the probability distribution $\aP^*_\gamma$ regenerates at 
times $(T^n: n\in \NN)$ (they are finite $\aP^*_\gamma$-a.s.). 
Denote by $\gamma(n)$ the distribution of $X_{T^n}$ on $\aS$ 
when we start from $\aP^*_\gamma$. Since $X_{T^{n-1}}$ 
is distributed as $\gamma(n\!-\!1)$ and
$\aP^*_{\gamma(n-1)}(X_T=s')=(\gamma(n)' Q)_{s'}$ we get 
$\gamma(n)'=\gamma' Q^{n}$. So, (\ref{maap}) gives 
\begin{equation}
\label{maap1}
\lim\limits_{N\to \infty}\gamma(n)=\wpi.
\end{equation}

Note that for all event $D\in \B_\infty^X$ and all 
probability vector $\nu$ we have
\begin{equation}
\label{maap8}
|\aP^*_\nu(D)-\aP^*_\wpi(D)|=
|\sum_{s\in \aS} (\nu_s-\wpi_s)\aP^*_s(D)|
\le \sum_{s\in \aS}|\nu_s-\wpi_s|.
\end{equation}
Let us fix $\epsilon>0$.
Since $\aP_\wpi(D)=\sum_{s\in \aS}\aP_s(D)$, 
when $\aP_\wpi(D)<\epsilon$ we get $\aP_s(D)<\epsilon/\wpi_s$. 
Since $\wpi>0$, from (\ref{ench1}) we obtain that 
for all $\epsilon>0$ and $r>0$,
\begin{equation}
\label{maap2}
\exists {\underline N}(\epsilon,r)
\hbox{ such that } \forall s\in \aS, \; 
\forall N\ge {\underline N}(\epsilon,r):
\;\, \aP^*_s(\beta_N< N-r)<\epsilon.
\end{equation}
Define the sequence of random variables $(\eta_N: n\in \NN^*)$
by
$$
\eta_N=\sup\{n\in \NN^*: T^n\le N\},
$$
where we put $\eta_N=0$ if $T^n> N$ for all $n\in \NN^*$.
The sequence $(\eta_N: N\in \NN^*)$ is increasing and
$$
\forall \; s\in \aS:\quad \aP_s^*
(\lim\limits_{N\to \infty}\eta_N=\infty)=1. 
$$
Then, 
$$
\forall {\tilde r}\in \NN^* \;\, \exists \,
{\underline N}'({\tilde r},\epsilon) \;
\forall  N\ge {\underline N}'({\tilde r},\epsilon) 
\; \forall s\in \aS:\;\, 
\aP_s^*(\eta_N\le {\tilde r})<\epsilon.
$$
Hence, for all $N\ge {\underline N}'({\tilde r},\epsilon)$
we have $\aP_\gamma^*(\eta_N\le {\tilde r})<\epsilon$.
This last relation and (\ref{maap1}) implies the existence 
of ${\underline N}''(\epsilon)$ that satisfies
$$
\forall  N\ge {\underline N}''(\epsilon):\quad
\aE_\gamma^*(\sum_{s\in \aS}|\gamma(\eta_N)_s-\wpi_s|)<\epsilon.
$$
Then, above relation and (\ref{maap8}) implies
that for all event $D\in \B_\infty^X$ and
$N\ge {\underline N}''(\epsilon)$ it is satisfied
\begin{eqnarray}
\nonumber
|\aE^*_\gamma(\aP^*_{\gamma(\eta_N)}(D))-\aP^*_\wpi(D)| 
&=&|\aE^*_\gamma(\aP^*_{\gamma(\eta_N)}(D)-\aP^*_\wpi(D))|\\
\label{maap10}
&\le& \aE^*_\gamma(\sum_{s\in \aS}|\gamma(\eta_N)_s-\wpi_s|)
<\epsilon.
\end{eqnarray}
From (\ref{maap2}) we get that for all
$N\ge {\underline N}(\epsilon,r)$ it holds
$$
|\aP^*_\gamma(X_{N+k}\!=\!i_k, k\!=\!0,..,m)-\!\!
\sum_{l=0}^r \!
\aP^*_\gamma(X_{N+k}\!=\!i_k, k\!=\!0,..,m, \beta_N\!=\!N\!-\!l)|
\!<\!\epsilon.
$$

By regeneration at times $\{T^n: n\in \NN^*\}$,
see (\ref{regdeb1}), and since the law
of $X_{T^n}$ is $\gamma(n)$ we obtain for all $l=0,..,r$:
\begin{eqnarray*}
&{}&\aP^*_\gamma(X_{N+k}=i_k, k=0,..,m; \beta_N=N-l)\\
&{}&=\aP^*_\gamma(X_{N+k}\!=\!i_k, k\!=\!0,..,m;
N\!-\!l \!\in \!\{T^n: n\!\in \!\NN^*\}, \\
&{}&\quad\quad\;\;\; (N\!-\!l,N]\cap \{T^n: n\!\in \!\NN^*\}\!=\!\emptyset)\\
&{}&=\aE_\gamma^*(\aP^*_{\gamma(\eta_{N-l})}
(X_{l+k}\!=\!i_k, k=0,..,m; T>l))
\aP^*_\gamma(N-l\in \{T^n: n\in \NN^*\}).
\end{eqnarray*}

From (\ref{maap10}) we get that all
$N\ge {\underline N}''(\epsilon)+r$ satisfies
$$
\big|\aE_\gamma^*(\aP^*_{\gamma(\eta_{N-l})}
(X_{l+k}\!=\!i_k, k\!=\!0,..,m; T\!>\!l))-
\aP^*_{\wpi}(X_{l+k}\!=\!i_k, k\!=\!0,..,m;T\!>\!l)\big|<\epsilon.
$$
From (\ref{ax1}), (\ref{ax2}), (\ref{ax3}) and (\ref{ax4}), we get
that the proof will be complete once we show
\begin{equation}
\label{maap5}
\lim\limits_{N\to \infty}\aP^*_\gamma(N\in \{T^n: n\in \NN^*\})=
\lim\limits_{N\to \infty}\aP^*_\wpi(N\in \{T^n: n\in \NN^*\}).
\end{equation}
From (\ref{maap2}) we get for all $N\ge {\underline N}(\epsilon,r)$ 
$$
\aP^*_\wpi(\beta_N\!<\! N\!-\!r)+
\aP^*_\gamma(\beta_N\!<\! N\!-\!r)<2\epsilon.
$$
Then, for all $k>0$ we obtain
\begin{eqnarray*}
&{}& 
\big|\aP^*_\gamma(N\!+\!k\!\in \!\{T^n: n\!\in \! \NN^*\})
-\aP^*_\wpi(N\!+\!k\!\in \! \{T^n: n\!\in \! \NN^*\})\big|
\\
&{}& \le
\big|\aP^*_\gamma(N\!+\!k\!\in\! \{T^n: n\!\in \!\NN^* \}, 
\beta_N\!\ge \!N\!-\!r)-
\aP^*_\wpi(N\!+\!k\!\in\! \{T^n: n\!\in \! \NN^*\},
\beta_N\!\ge \! N\!-\!r)\big|\\
&{}& \;\;\;\, + \,2\epsilon.
\end{eqnarray*}
We have
\begin{eqnarray*}
&{}& \big|\aP^*_\wpi(N\!+\!k \!\in \! \{T^n: n\!\in\! \NN^*\},
\beta_N\!\ge \! N\!-\!r)-\aP^*_\gamma(N\!+\! k \!\in \!
\{T^n: n\!\in \!\NN^*\},\beta_N\!\ge \!N\!-\!r)\big|\\
&{}&=
\sum_{l=0}^r \, (\aP^*_\wpi(\beta_N\!=\!N\!-\!l)
\aP^*_\wpi(k\!+\!l \!\in\! \{T^n: n\!\in\!\NN^*\})\\
&{}&\quad\quad\quad -\aP^*_\gamma(\beta_N\!=\!N\!-\!l)
\aE^*_\gamma(\aP^*_{\gamma(\eta_{N-l})}
(k\!+\!l\!\in\! \{T^n: n\!\in\!\NN^*\}))).
\end{eqnarray*}
For all $N\ge {\underline N}''(\epsilon)$ we have
$$
|\aE^*_\gamma(\aP^*_{\gamma(\eta_N)}
(k\!+\!l\!\in\! \{T^n: n\!\in\!\NN^*\}))-
\aP^*_\wpi(k\!+\!l\!\in\! \{T^n: n\!\in\!\NN^*\})|<\epsilon.
$$
We use $\aP^*_\wpi(k\!+\!l\!\in\! \{T^n: n\!\in\!\NN^*\})
=(\sum_{s\in \aS}\pi_s)^{-1}$ and put all these relations together
to conclude,
\begin{eqnarray*}
&{}& \big|\aP^*_\wpi(N\!+\!k \!\in \! 
\{\wT^n: n\!\in\!\NN^*\},\beta_N\ge N\!-\!r)
-\aP^*_\gamma(N\!+\!k \!\in \! \{\wT^n: n\!\in \!\NN^*\},
\beta_N\ge N\!-\!r)\big|\\
&{}& \le
\big|\sum_{l=0}^r (\aP^*_\gamma(\beta_N=N\!-\!l)-
\aP^*_\wpi(\beta_N=N\!-\!l))
\big|(\sum_{s\in \aS}\pi_s)^{-1}+\epsilon\\
&{}& \le 2\epsilon(\sum_{s\in \aS}\pi_s)^{-1}. 
\end{eqnarray*}
Hence (\ref{maap5}) is shown. Therefore, relation (\ref{eqw3}) is proven.
\end{proof}

\begin{remark}
\label{rem1x}
Note that in the proof of property (\ref{eqw3}) 
we require a starting measure 
of the type $\sum_{s\in \aS} \gamma_s \aP_s^*$ because, 
on the one hand the definition of $T$ needs that the
starting state in $\aS$ is defined and, on the other hand 
we use the regenerative equality of this measure as stated in 
(\ref{caso0x}). $\Box$
\end{remark}

Let $\PP^*$ the be law defined on $I^\ZZ$ by (\ref{biinf1}). The same 
proof showing property (\ref{eqw2}) in Theorem \ref{thmren1} 
allows us to prove that for all $l\in \ZZ$ and all 
$(i_k: k=0,..,m)\in I^+$ we have
\begin{equation}
\label{eq11}
\PP^*(X_{l+k}=i_k, k=0,..,m)=
\lim\limits_{N\to \infty} \aP_\wpi^*(X_{l+k+N}=i_k, k=0,..,m).
\end{equation}

From (\ref{maap2}) we deduce that the random variable 
$$
T^0=\sup\{T^n: T\le 0\}
$$ 
which takes value in $\{l\in \ZZ: l\le 0\}$), 
has a proper distribution. That is $T^0$
is finite $\PP^*-$a.s.. Since there is regeneration at $T_0$ 
the random variable $X_{T_0}$ has distribution $\wpi$.
Then,
\begin{eqnarray}
\nonumber
&{}& \PP^*(X_0=i_k, k=0,..,m)=\sum_{n\in \NN}
\PP^*(T^0=-n, X_0=i_k, k=0,..,m)\\
\label{eq12}
&{}&\; =\sum_{n\in \NN} \PP^*(X_0=i_k, k=0,..,m \, | \, T^0=-n) \PP^*(T^0=-n).
\end{eqnarray}
Since $\PP^*$ regenerates at each $T^n$ with law $\wpi$, we have  
\begin{equation}
\label{eq13}
\PP^*(X_0=i_k, k=0,..,m \, | \, T^0=-n)=
\aP_\wpi(X_n=i_{k+n}, k=0,..,m \, | \, T>n).
\end{equation}
On the other hand from (\ref{kx1}) we get,
\begin{equation}
\label{eq14}
\aP^*_\wpi(T^0=-n)=
(\sum_{s\in \aS}\pi_s)\aP^*_\wpi(T>n).
\end{equation}
Then, we retrieve the definition done in (\ref{def1}),
$$
\PP^*(X_0=i_k, k=0,..,m)=
\sum_{s\in \aS} \sum_{n\in \NN} \pi_s \aP^*_s(X_n=i_{k+n}, k=0,..,m;  T>n).
$$
Hence (\ref{eq12}), (\ref{eq13}) and (\ref{eq14}) give a probabilistic 
insight to definition $\PP^*$ and allow us to have a good definition of $T$
under law $\PP^*$, 
as claimed in Remark \ref{remfl1}. 

\section{Stationarity and Chargaff second parity rule}
\label{sec3x}

Let $\Li$ be an alphabet and $Y_n:\Li^\NN\to \Li$
be the $n$-th coordinate function: $Y_n(y)=y_n$ for $y\in L^\NN$.
Let $\varphi:\Li\to \Li$ be a convolution, this means
$\varphi$ is one-to-one and $\varphi^{-1}=\varphi$.
Since $\varphi$ is a bijection we have 
$L=\{\varphi(h): h\in \Li\}$.

\medskip

Let $P$ be a probability measure on $\Li^\NN$. 
We say that $P$ satisfies the 
Chargaff second parity rule (CSPR) with respect to $\varphi$ if
for all $m\in \NN$, all $(l_0,..,l_m)\in \Li^{m+1}$
and all $t\in \NN$ it is satisfied:
\begin{equation}
\label{chff1}
P(Y_{k+t}=l_k, k=0,..,m)=
P(Y_{k+t}=\varphi(l_{m-k}), k=0,..,m)\,.
\end{equation}
We claim that (\ref{chff1}) is satisfied if it holds
for $t=0$. That is, if for all 
$m\in \NN$ and all $(l_0,..,l_m)\in \Li^{m+1}$,
\begin{equation}
\label{chff2}
P(Y_k=l_k, k=0,..,m)=P(Y_k=\varphi(l_{m-k}), k=0,..,m)\,.
\end{equation}
In fact, from (\ref{chff2}) we get,
\begin{eqnarray*}
&{}&P(Y_k=h_k, k\!=\!0,..,t\!-\!1; Y_{t+k}\!=\!l_k,
k\!=\!0,..,m; Y_{t+m+k}\!=\!c_k, k\!=\!0,..,t\!-\!1)\\
&{}&
=P(Y_k=\varphi(c_{t-1-k}), k=0,..,t\!-\!1;
Y_{k+t}=\varphi(l_{m-k}), k=0,..,m; \\
&{}& \quad\quad\quad\quad\quad\quad\quad
\quad\quad\quad\quad\quad\quad\quad\quad\;
Y_{k+t+k}=\varphi(h_{t-1-k}), k=0,..,t\!-\!1).
\end{eqnarray*}
Hence, by summing on $(h_0,..,h_{t-1})\in \Li^t$ and 
$(c_0,..,c_{t-1})\in \Li^t$ we get (\ref{chff1}).

\begin{proposition}
\label{chimst}
If $P$ verifies the CSPR then it is stationary.
\end{proposition}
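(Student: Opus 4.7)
The plan is to deduce stationarity from CSPR by applying (\ref{chff2}) to a longer cylinder event and then marginalizing out the extra coordinates, exploiting the fact that $\varphi$ is a bijection of $\Li$. Fix $t \geq 1$ and $(l_0,\dots,l_m) \in \Li^{m+1}$; the goal is to establish
\[
P(Y_{k+t}=l_k,\, k=0,\dots,m) \;=\; P(Y_k=l_k,\, k=0,\dots,m),
\]
noting that (\ref{chff2}) is exactly CSPR at $t=0$ and has already been shown in the text to be equivalent to the full CSPR (\ref{chff1}).

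First I would write the left-hand side as a marginal over the free coordinates $Y_0,\dots,Y_{t-1}$:
\[
P(Y_{k+t}=l_k,\, k=0,\dots,m) \;=\; \sum_{h_0,\dots,h_{t-1} \in \Li} P(Y_j = l'_j,\, j=0,\dots,m+t),
\]
where $l'_j = h_j$ for $0 \leq j < t$ and $l'_{t+k} = l_k$ for $0 \leq k \leq m$. Next I would apply (\ref{chff2}) to this extended tuple of length $m+t+1$. A careful index count shows that the $\varphi$-reflected event imposes $Y_j = \varphi(l_{m-j})$ for $j=0,\dots,m$ (the $l$-block, now sitting at indices $\leq m$) and $Y_j = \varphi(h_{m+t-j})$ for $j=m+1,\dots,m+t$ (the $h$-block). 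I would then sum over $(h_0,\dots,h_{t-1}) \in \Li^t$: since $\varphi$ is a bijection of $\Li$, summing each $h_i$ over $\Li$ is the same as summing $\varphi(h_i)$ over $\Li$, so the $h$-block marginalizes away, leaving $P(Y_j = \varphi(l_{m-j}),\, j=0,\dots,m)$. Finally, one more application of (\ref{chff2}), now to the original tuple $(l_0,\dots,l_m)$, rewrites this as $P(Y_j = l_j,\, j=0,\dots,m)$, which is the desired stationarity identity.

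The only non-routine point is the bookkeeping in the middle step: one must check that when CSPR is applied to the extended $(m+t+1)$-tuple, the $\varphi$-reflection cleanly decouples the constrained $l$-coordinates from the free $h$-coordinates, sending the former to the indices $0,\dots,m$ and the latter to $m+1,\dots,m+t$. Once that split is verified, the involutive property of $\varphi$ makes the marginalization immediate, and the whole argument reduces to two applications of (\ref{chff2}) sandwiching a change of variables $h_i \mapsto \varphi(h_i)$.
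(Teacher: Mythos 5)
Your proof is correct and follows essentially the same route as the paper: two applications of (\ref{chff2}) sandwiching a marginalization of free coordinates, exploiting that $\varphi$ is a bijection of $\Li$. The only difference is cosmetic: you handle a general shift $t$ directly with $t$ free coordinates, whereas the paper proves only the one-step case (a single summed symbol $h$) and obtains general $t$ by the standard inductive reduction it records around (\ref{stat2x}).
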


\begin{proof}
Assume $P$ satisfies the CSPR. 
For all $m\in \NN^*$ we have
\begin{eqnarray*}
&{}& \sum_{h\in \Li}P(Y_0=h, Y_{k+1}=l_{k}, k=0,..,m)\\
&{}&=
\sum_{h\in \Li}P(Y_{m+1}=\varphi(h), Y_{m-k}=
\varphi(l_{k}), k=0,..,m)\\
&{}&
=P(Y_{m+1}\in \Li, Y_{m-k}=\varphi(l_{k}), k=0,..,m)\\
&{}&
=P(Y_{m-k}=\varphi(l_{k}), k=0,..,m)=
P(Y_{k}=l_{k}, k=0,..,m)\,.
\end{eqnarray*}
Then, the result follows.
\end{proof}

Let us fix $d\in \NN^*$ and consider $I:=\Li^d$ as a new alphabet.
Take the following transformation $\zeta:\Li^\NN\to I^\NN$,
$y\to x=\zeta y$ with $x_n=(\zeta y)_n=(y_{dn},..,y_{d(n+1)-1})$.
Let $P\circ \zeta^{-1}$ be the induced law on $I^\NN$.
We claim that if $P$ is stationary, then also
$P\circ \zeta^{-1}$  is stationary. Let
$X_n:I^\NN\to I$ be the $n-$th coordinate function, we must prove
that for all $m\in \NN$ and
$((l_{dk},..,l_{d(k+1)-1}):k=0,..,m)\in I^{m+1}$ we have
\begin{eqnarray*}
&{}&
P\circ \zeta^{-1}(X_k=(l_{dk},..,l_{d(k+1)-1}), k=0,..,m)\\
=&{}& \!\!
\sum_{(c_0,..,c_{d-1})\in \Li^r}\!\!\!\!\!\!\!\!
P\circ \zeta^{-1}(X_0=(c_0,..,c_{d-1}),
X_{k+1}=(l_{dk},..,l_{d(k+1)-1}, k=0,..,m)\,.
\end{eqnarray*}
This relation is equivalent to,
\begin{eqnarray*}
&{}& P(Y_t=l_t, t=0,..,dm-1)\\
&{}& =
\sum_{(c_0,..c_{d-1})\in \Li^d}\!\!\!\!\!
P(Y_0=c_0,..,Y_{d-1}=c_{d-1}; Y_{t+d}=l_t, t=0,..,dm-1)\,,
\end{eqnarray*}
which is equivalent to the equality
$$
P(Y_t=l_t, t=0,..,dm-1)=P(Y_{t+d}=l_{t}, t=0,..,dm-1)\,.
$$
This last relation follows straightforward from the stationarity 
of $P$, proving that $P\circ \zeta^{-1}$ is stationary.

\medskip

For $I=\Li^d$, let $\psi:I\to \A$ be an onto-function
and consider the function $\Psi:\Li^\NN\to \A^\NN$, 
$y\to \Psi y$ by $(\Psi y)_n=\psi((\zeta y)_n)$. 
We claim that if $P$ is stationary,
then also $P\circ \Psi^{-1}$  is stationary. Denote
by $Z_n:\A^\NN\to \A$ the $n-$th coordinate function, 
we must show that
$$
P\circ \Psi^{-1}(Z_k=a_k, k=0,..,m)=
\sum_{b\in I}P\circ \Psi^{-1}(X_0=b, X_{k+1}=a_{k}, k=0,..,m)\,.
$$
From the equality,
\begin{eqnarray*}
&{}&
P\circ \Psi^{-1}(Z_k=a_k, k=0,..,m)\\
&{}&=\!\!\!\!
\sum_{(l_{dk},..,l_{d(k+1)-1})\in \psi^{-1}\{a_k\}, 
k=0,..,m}\!\!\!\!\!\!\!
\!\!\!\!\!\!\!\!\!
P \circ \zeta^{-1}(X_k=(l_{dk},..,l_{d(k+1)-1}), k=0,..,m),
\end{eqnarray*}
the stationarity of $P\circ \Psi^{-1}$ is retrieved from the
stationarity of $P\circ \zeta^{-1}$.

\medskip

Let us state that a weaker condition of CSPR implies 
a weaker stationary property. Assume that the CSPR 
is verified only for words of length smaller or equal to
$t$. This means that for all for all $m<t$, 
all $(l_0,..,l_m)\in \Li^{m+1}$
and all $u\in \NN$ it is satisfied:
$$
P(Y_{k+u}=l_k, k=0,..,m)=
P(Y_{k+u}=\varphi(l_{m-k}), k=0,..,m)\,.
$$
Let us prove that in this case the stationarity only holds 
for cylinders of length strictly smaller than $t$. 

\begin{proposition}
\label{chimstxl}
Let $t\ge 2$. Assume $P$ verifies the CSPR for cylinders 
defined by words of length smaller or equal to $t$, then
for all $m<t-1$ and all $(l_0,..,l_m)\in \Li^{m+1}$ we have
$$
\forall u\ge 1:\;\,
P(Y_{k+u}\!=\!l_k, k\!=\!0,..,m)=
P(Y_{k}\!=\!l_k, k\!=\!0,..,m)\,.
$$
\end{proposition}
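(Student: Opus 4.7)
The plan is to mimic the proof of Proposition \ref{chimst} essentially verbatim, taking care at each step that the CSPR application is only performed on words whose length stays within the allowed bound $t$. The original proof applied CSPR once to reduce the sum-over-the-first-coordinate to a sum-over-the-last-coordinate. Here we need the same manipulation, but with a bookkeeping layer that tracks the maximal length of cylinders to which CSPR is applied.

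Concretely, I would fix $u\ge 0$, $m<t-1$ and $(l_0,\dots,l_m)\in \Li^{m+1}$, and start from
\[
P(Y_{k+u+1}=l_k,\,k=0,\dots,m)=\sum_{h\in \Li}P(Y_u=h,\,Y_{k+u+1}=l_k,\,k=0,\dots,m).
\]
The cylinder on the right-hand side has length $m+2\le t$, so the hypothesis allows me to apply the bounded CSPR (in the shifted form stated in the paragraph above Proposition \ref{chimstxl}) and rewrite this sum as
\[
\sum_{h\in \Li}P(Y_u=\varphi(l_m),\,Y_{u+1}=\varphi(l_{m-1}),\dots,Y_{u+m}=\varphi(l_0),\,Y_{u+m+1}=\varphi(h)).
\]
Summing over $h$ eliminates the last coordinate and yields $P(Y_{k+u}=\varphi(l_{m-k}),\,k=0,\dots,m)$, a cylinder of length $m+1\le t-1\le t$. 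Applying the bounded CSPR a second time (using $\varphi\circ\varphi=\mathrm{id}$) converts this back to $P(Y_{k+u}=l_k,\,k=0,\dots,m)$.

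The chain of equalities therefore establishes the one-step shift identity $P(Y_{k+u+1}=l_k,\,k=0,\dots,m)=P(Y_{k+u}=l_k,\,k=0,\dots,m)$ for every $u\ge 0$. A straightforward induction on $u\ge 1$ then yields the stated stationarity $P(Y_{k+u}=l_k,\,k=0,\dots,m)=P(Y_k=l_k,\,k=0,\dots,m)$.

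The main (and only) subtle point is the length bookkeeping: the first CSPR application operates on a word of length $m+2$ and the second on a word of length $m+1$, so both are legitimate precisely when $m+2\le t$, i.e. $m<t-1$. This matches the hypothesis of the proposition and explains the loss of one unit in the cylinder length between the CSPR assumption (valid up to length $t$) and the stationarity conclusion (valid up to length $t-1$). There is no genuine obstacle beyond this counting; the argument is structurally identical to that of Proposition \ref{chimst}.
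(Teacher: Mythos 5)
Your proof is correct and is essentially identical to the paper's: the paper also establishes the one-step shift $P(Y_{u+1+k}=l_k,\,k=0,\dots,m)=P(Y_{u+k}=l_k,\,k=0,\dots,m)$ by summing over the preceding coordinate, applying the length-restricted CSPR first to the word of length $m+2\le t$ and then (after eliminating the last coordinate via the bijectivity of $\varphi$ and using $\varphi\circ\varphi=\mathrm{id}$) to the reversed word of length $m+1$, concluding by induction on $u$. The length bookkeeping $m<t-1\Leftrightarrow m+2\le t$ that you single out is precisely the constraint invoked in the paper's inductive step.
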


\begin{proof}
We prove it by induction on $u\ge 1$. For $u=1$ the proof is the
same as the one done in Proposition \ref{chimst}.
Assume it has been shown up to $u$, let us prove it for $u+1$.
Since $m+2\le t$ we get,
\begin{eqnarray*}
&{}& P(Y_{u+1+k}=l_{k}, k=0,..,m)=
\sum_{h\in \Li}P(Y_{u}=h, Y_{u+1+k}=l_{k}, k=0,..,m)\\
&{}&=
\sum_{h\in \Li}P(Y_{u+m-k}=\varphi(l_{k}), 
k=0,..,m; Y_{u+1+m}=\varphi(h))\\
&{}&
=P(Y_{u+m-k}=\varphi(l_{k}), k=0,..,m)=
P(Y_{u+k}=l_{k}, k=0,..,m)\,.
\end{eqnarray*}
Then, an inductive argument is applied to get
$P(Y_{u+1+k}=l_{k}, k=0,..,m)=P(Y_{k}=l_{k}, k=0,..,m)$.
Hence, the result follows.
\end{proof}

\medskip

In a genomic framework $\Li=\{A,C,G,T\}$  and
$\varphi:\{A,C,G,T\}\to \{A,C,G,T\}$ is the
involution given by $\varphi(A)=T$, $\varphi(C)=G$.
The CSPR is satisfied for the empirical probability measure 
on the DNA nucleotide sequence of bacterial genome.
So, $d=3$, $I=\Li^3$ is the list of codons
and $\PP=P\circ \zeta^{-1}$ on $I^\NN$. The alphabet 
$\A$ is the list of aminoacids and
$\psi:\Li^3\to \A$ is the genetic code. Hence, a
consequence of CSPR is that the probability distribution
on the nucleotide sequences, and so on codon sequences, 
is stationary. If one accepts that 
CSPR is only valid for small $t-$mers of nucleotides 
with $t\approx 10$, then the weak stationarity property 
stated in Proposition \ref{chimstxl} implies stationarity 
for $9-$mers in the nucleotide sequence,
which in the alphabet of codons means stationarity 
for triplets of codons.

\medskip

For a fundamental explanation of CSPR, it is argued in \cite{hmo} 
that it would be a probabilistic consequence of the reverse
complementarity between paired strands, because symmetry
of chemical energy implies Gibbs distribution is
invariant by reverse complementarity which is exactly CSPR.

\section{Random model}
\label{sec1'x}

We will modify the model studied in Sections 
\ref{sec1x}, \ref{sec2x} and \ref{secren1} 
to approach some of the phenomena occurring in codon 
sequences of bacteria genome. 
Up to now  a region of a new type starts when a state of
a different class is hit. Nevertheless, it is known 
in genome annotation that when an intergenic region hits 
a start codon of a genic region only a small proportion 
of these start codons mark the beginning of a genic region. 
Some signals must be present in the neighborhood 
of the codon to trigger a beginning. 
Nowadays, there is an active research on this domain, 
either on the list of motifs and on the localization they 
must be with respect to the starting codons. A recent discussion 
on this topic can be found in \cite{rw}. 

\medskip

So, at each time a site containing a state of a different class is hit 
a decision must be made: either a new region starts, 
or this beginning is postponed and continues to be governed by  
the symbol of the former region. We will model this decision 
by a random choice, in this purpose we use a 
sequence of independent random variables 
uniformly distributed in the unit interval. Our model admits that
the decision depends on the hit state. 
 
\medskip

From now on we assume that each symbol $s\in \aS$ has 
a probability $\epsilon(s)\in (0,1]$ of start governing a new 
region when it is hit by a region of type different from $C(s)$.
Note that $\epsilon(s)> 0$ is a natural constraint, 
in fact in the contrary we could delete $s$ from $\aS$. 
The case $\epsilon(s)=1$ means that it is sure that when a 
region of type $C$, with $C\neq C(s)$, encounters a site
containing a symbol $s$ then a region governed by $\aP_s$ starts. 

\medskip

The sample region for the random choice is the unit interval 
$R:=[0,1]$ which is endowed with the Borel $\sigma-$field
$\B(R)$ and the Lebesgue measure denoted by $|\cdot|$. So $|R'|$ 
is the Lebesgue measure of a Borel set $R'\subseteq R$. 
The product spaces $R^\NN$ and $R^{\NN^*}$ are respectively 
endowed with the Borel product $\sigma-$fields
noted respectively by $\B(R^\NN)$ and $\B(R^{\NN^*})$.
By $e$ me denote a random variable 
uniformly distributed in $R$ and $P^{e}$ denotes this distribution. 
Let ${\vec e}=(e_m: n\in \NN^*)$ be a sequence of independent
random variables uniformly distributed in $R$.
So, the distribution of ${\vec e}$ 
in $R^{\NN^*}$ is the product measure, 
$P^{\vec e}:=P^{e\otimes \NN^*}$. Consider the projection 
$Z_n:R^\NN\to R$, $z\in R^\NN\to z_n\in R$
for $n\in \NN$. For $R'_1,..R'_m\in \B(R)$ we have
$$
P^{\vec e}(Z_k\in R'_k, k=1,..,m)=\prod_{k=1}^m |R'_k|\,.
$$

Define the space $\K:=I\times R$, whose points 
are pairs $(i,r)\in I\times R$. 

\medskip

To each $s\in \aS$ we associate
a fixed interval $R_s\subseteq R$ with size 
$|R_s|=\epsilon(s)$. 

\medskip

We consider the following dynamics: if 
a region governed by $s$ encounters a pair $(s',r')$ 
then a new region starts if and only if 
$C(s')\neq C(s)$ and $r'\in R_{s'}$.
In this new setting, the set of starting states is
$$
\aV=\bigcup_{s\in \aS} \{s\}\times R_s \,.
$$ 
Hence $(\aS\times R)\setminus \aV=
\bigcup_{s\in \aS} \{s\}\times (R\setminus R_s)$ 
are the states having a starting symbol  
but with a value in the sample region that prevent 
it to start governing a new region.

\medskip

The class $\C(v)$ associated with $v=(r,s)\in \aV$ is defined 
to be
$$
\C(v)=\bigcup_{s'\in C(s)}\{s'\}\times R_{s'}.
$$
Hence $(\C(s,r)=\C(s',r')) \Leftrightarrow 
(C(s)=C(s'), r\in R_s, r'\in R_{s'})$.

\medskip

For all $s\in \aS$ we define the conditional law 
$P^{e_0}(\cdot \,| \, s)$ to be 
uniformly distributed on $R_s$, that is
$$
P^{e_0}(R' \,| \, s)=|R'\cap R_s|/|R_s|\,,\;\, R'\in \B(R)\,.
$$ 

Consider the product space 
$\K^\NN=(I\times R)^\NN=I^\NN\times R^\NN$. We set 
$\X_n:\K^\NN\to \K$, $w\in \K^\NN\to \X(w)=w_n\in \K$ 
the projection onto the $n-$th component. 
Let $w_n=(x_n,r_n)$, we denote 
$X_n:\K^\NN\to I$, $w\to x_n$ and $Z_n:\K^\NN\to R$, $w\to r_n$.
So, we can write $\X_n=(X_n,Z_n)$. This is an abuse of notation 
with $X_n$ and $Z_n$, in fact we will also continue writing
$X_n:I^\NN\to I$, $x\in I^\NN\to x_n\in I$ 
and $Z_n:R^\NN\to R$, $z\in R^\NN\to z_n\in R$. 
We keep the same notation for the shift
$\Theta_q:\K^\NN\to \K^\NN\,, \;\, (\Theta_q w)_n=w_{n+q}$, 
as the one introduced in (\ref{defshift}) for $I^\NN$ and 
use the same notation $\Theta_q$ for the shift in  $R^\NN$.

\medskip

As before we endow $\K^\NN$ with the $\sigma-$field 
$\B^\X_{\infty}=\sigma(\X_n: n\in \NN)$ and we denote
$\B^\X_n=\sigma(\X_0,..,\X_n)$.
Let $P$ be a probability measure on $(\K^\NN, \B^\X_{\infty})$.
A random time $T':\K^\NN\to \NN\cup \{\infty\}$ 
is a stopping time with respect to the filtration 
$(\B^\X_n:n\in \NN)$ 
when $\{T'\le n\}\in \B^\X_n$ is satisfied for all $n\in \NN$. The
$\sigma-$field associated to a stopping time $T'$ was 
already defined and denoted by $\B^\X_{T'}$. 

\medskip

Assume $X_0\in \aV$. Then, the random time
$$
\T:=T_{\aV\setminus \C(\X_0)}=\inf\{n>0: \C(\X_n)\neq \C(\X_0)\},
$$
is well-defined (it can take the value $\infty$) and it is 
a stopping time. As already done for a random time 
in (\ref{succ1}), we define the sequence  of times
$$
\T^1=\T \hbox{ and for } n\in \NN^*: \;\;
\T^{n+1}=\T^n+\T\circ \Theta_{\T^n},
$$
which are also stopping times. We have $\T^1=\T$ and 
$$
\T^{n+1} \hbox{ finite implies }  \, 
C(\X_{\T^{n+1}})\neq \C(\X_{\T^n}).
$$

Let $(\aP_s: s\in \aS)$ be a family of probability 
distribution on $I^\NN$ such that for all 
$s\in \aS$, $\aP_s(X_0=s)=1$ and satisfies condition (\ref{imp0}). 
Each $\aP_s$ defines the following probability measure 
$\aP_s^{\dag}$ on $\K^\NN$: 
for all $m\in \NN$, $(i_0,..,i_m)\in I^{m+1}$ and 
$R'_0,..,R'_m\in \B(R)$,
\begin{eqnarray*}
\nonumber
&{}& \aP_s^{\dag}(X_k=i_k, Z_k\in R'_k, k=0,..,m)\\
&{}&=\ind_{\{i_0=s\}} P^{e_0}(R'_0  | s)
\aP_s(X_k\!=\!i_k,k\!=\!1,..,m)
P^{\vec e}(Z_k\in R'_k, k\!=\!1,..,m).
\end{eqnarray*}
Then, the initial distribution of $\aP_s^{\dag}$ is the 
uniform one on $\{s\}\times R_s$. Note that
\begin{equation}
\label{inic1}
\forall R'\in \B(R), \; 
R'\supseteq R_s: \quad \aP_s^{\dag}(X_0=s, Z_0\in R')=1,
\end{equation}
in particular 
$\aP_s^{\dag}(X_0=s, Z_0\in R)=\aP_s^{\dag}(X_0=s)=1$.

\medskip

Since condition (\ref{imp0}) ensures  
$\aS\setminus C(s)$ is attained in finite time $\aP_s-$a.s., we
apply the  Borel-Cantelli Lemma to the independent 
random variables $(Z_n: n\in \NN^*)$ to get
\begin{equation}
\label{imp0r}
\forall s\in \aS:\;\; \aP^{\dag}_s(\T<\infty)=1\,.
\end{equation}
Let $\aE_s^{\dag}$ be the expected value 
defined by $\aP_s^{\dag}$.

\medskip

The following definition will depend on a 
strictly positive vector vector 
$\pi^{\dag}=(\pi^{\dag}_s: v\in \aS)$.

\begin{definition}
\label{defr}
For the family $(\aP^{\dag}_s: s\in \aS)$ and  
$\pi^{\dag}=(\pi^{\dag}_s: s\in \aS)>0$ 
we define $\PP^{\dag}$ on $\K^\NN$ by: 
\begin{eqnarray}
\nonumber
\forall B\in \B^\X_\infty:\quad 
\PP^{\dag}(B)&=&\sum\limits_{s\in \aS}\pi^{\dag}_s
\left(\!\sum\limits_{n\ge 0}\aE^{\dag}_s(\ind_{\T> n} 
\ind_B\circ \Theta_n)\right)\\
\label{eqx1}
&=&\sum\limits_{s\in \aS}\pi^{\dag}_s
\left(\sum\limits_{n\ge 0}\aP^{\dag}_s(\T> n,
B\circ \Theta_n^{-1})\right),
\end{eqnarray}
where $\Theta_n$ is the shift operator on $\K^\NN$.
\end{definition}

Obviously $\PP^{\dag}$ is a measure. For all $m\in \NN^*$, 
$(i_0,..,i_m)\in I^{m+1}$ and $R'_0,..,R'_m\in \B(R)$ we have
\begin{eqnarray}
\nonumber
&{}& \PP^{\dag}(X_k=i_k, Z_k\in R'_k, k=0,..,m)\\
\label{ca1xR}
&{}& =\sum\limits_{s\in \aS}\pi^{\dag}_s (\sum\limits_{n\ge 0}
\aP^{\dag}_s(\T\!> \!n, X_{k+n}\!=\! i_l,  
Z_{k+n}\in R'_k, k\!=\!0,..,m)).
\end{eqnarray}

\begin{theorem}
\label{cprobxR}
There exists some strictly positive vector 
$\pi^{\dag}=(\pi^{\dag}_s: s\in \aS)$ 
such that $\PP^{\dag}$ defined by (\ref{ca1xR}) is a 
probability measure if and only if it is satisfied
\begin{equation}
\label{cexistxR}
\forall \, s\in \aS:\;\; \aE^{\dag}_s(\T)<\infty \,.
\end{equation}
In this case, the condition on $\pi^{\dag}$
\begin{equation}
\label{cnormxR}
\sum\limits_{s\in \aS}\! \pi^{\dag}_s \aE_s^{\dag}(\T)=1\,,
\end{equation}
is necessary and sufficient in order that $\PP^{\dag}$ 
is a probability measure on $I^\NN$.
\end{theorem}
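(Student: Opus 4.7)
My approach would mimic the proof of Theorem \ref{cprobx}, with the alphabet $I$ replaced by the enriched space $\K = I \times R$, the set of initial symbols $\aS$ replaced by $\aV$, the class map $C$ replaced by $\C$, the family $(\aP_s)$ by $(\aP^{\dag}_s)$, the stopping time $T$ by $\T$, and (\ref{imp0}) by its analogue (\ref{imp0r}). The measure $\PP^{\dag}$ defined by (\ref{eqx1}) is plainly a positive measure on $(\K^\NN, \B^\X_\infty)$, being a countable sum of positive sub-measures, so the only remaining question is its total mass.

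The key computation is that of $\PP^{\dag}(\K^\NN)$. Setting $B = \K^\NN$ in (\ref{eqx1}) yields
\[
\PP^{\dag}(\K^\NN) = \sum_{s \in \aS} \pi^{\dag}_s \sum_{n \ge 0} \aP^{\dag}_s(\T > n).
\]
Because $\T \ge 1$ and $\aP^{\dag}_s(\T < \infty) = 1$ by (\ref{imp0r}), the standard identity $\sum_{n \ge 0} \aP^{\dag}_s(\T > n) = \aE^{\dag}_s(\T)$ holds (both sides simultaneously finite or $+\infty$), so that
\[
\PP^{\dag}(\K^\NN) \;=\; \sum_{s \in \aS} \pi^{\dag}_s\, \aE^{\dag}_s(\T). \qquad (\star)
\]
From $(\star)$ the two assertions of the theorem follow at once. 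If some strictly positive $\pi^{\dag}$ makes $\PP^{\dag}$ a probability measure, then the right-hand side of $(\star)$ equals $1$; since every $\pi^{\dag}_s > 0$ this forces each $\aE^{\dag}_s(\T) < \infty$, giving (\ref{cexistxR}), and (\ref{cnormxR}) is precisely that right-hand side equalling $1$. Conversely, under (\ref{cexistxR}) every $\aE^{\dag}_s(\T)$ is a finite positive number (positive because $\T \ge 1$), so strictly positive vectors satisfying (\ref{cnormxR}) exist (take $\pi^{\dag}_s$ proportional to $1/\aE^{\dag}_s(\T)$ and normalise); for any such $\pi^{\dag}$, identity $(\star)$ gives $\PP^{\dag}(\K^\NN) = 1$, so $\PP^{\dag}$ is a probability measure. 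Given (\ref{cexistxR}), $(\star)$ also exhibits (\ref{cnormxR}) as equivalent to $\PP^{\dag}(\K^\NN) = 1$.

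I do not anticipate any genuine obstacle; the argument is essentially bookkeeping. An alternative closer in form to the proof of Theorem \ref{cprobx} would split $\PP^{\dag}(\X_0 \in \cdot)$ according to whether $\X_0 \in \aV$ or $\X_0 \in \K \setminus \aV$, derive analogues of (\ref{caso1x})--(\ref{caso2x}) for cylinders $\{X_k = i_k,\ Z_k \in R'_k\}_{k = 0, \ldots, m}$, and telescope as in the sum of (\ref{sum1}) and (\ref{sum2}). The only subtle point in that route is that under $\aP^{\dag}_s$ the initial pair $(X_0, Z_0)$ is uniform on $\{s\} \times R_s$ rather than concentrated on a single point, cf.\ (\ref{inic1}); this modifies the intermediate cylinder formulas but not the final telescoped identity, since $\aP^{\dag}_s(\X_0 \in \aV) = 1$. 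The direct route through $(\star)$ sidesteps these decompositions entirely.
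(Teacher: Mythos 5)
Your proof is correct, and it takes a genuinely more direct route than the paper's. The paper argues exactly as in Theorem \ref{cprobx}: it computes the coordinate-zero mass of $\PP^{\dag}$ separately on $\{X_0\in\aS,\,Z_0\in R\}$ and on $\{X_0\in I\setminus\aS,\,Z_0\in R\}$ --- the analogues (\ref{sum1R}) and (\ref{sum2R}) of (\ref{sum1}) and (\ref{sum2}) --- each expressed through occupation-time quantities $\aE_s^{\dag}\bigl(\sum_{n=1}^{\T}\ind_{\{X_n\in\aS\}}\bigr)$, and then adds the two halves so that these terms cancel, leaving the total mass $\sum_{s\in\aS}\pi_s^{\dag}\aE_s^{\dag}(\T)$. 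You obtain the same identity $(\star)$ in one step by evaluating (\ref{eqx1}) at $B=\K^{\NN}$ and applying the tail-sum formula $\sum_{n\ge 0}\aP_s^{\dag}(\T>n)=\aE_s^{\dag}(\T)$, which holds for any $\NN^{*}\cup\{\infty\}$-valued time with both sides simultaneously finite or infinite (so even your appeal to (\ref{imp0r}) is dispensable at that point). The endgame --- reading both equivalences off $(\star)$, with existence of a normalizing $\pi^{\dag}$ via $\pi_s^{\dag}\propto \aE_s^{\dag}(\T)^{-1}$, legitimate because $\aS$ is finite and $1\le \aE_s^{\dag}(\T)<\infty$ under (\ref{cexistxR}) --- is the same in both treatments. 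What your shortcut buys is the avoidance of the bookkeeping that makes the paper's split delicate in the random model: on $\{\T>n+1\}$ the event $\{X_{n+1}\in\aS\}$ decomposes into $\X_{n+1}\in(\aS\times R)\setminus\aV$ and $\X_{n+1}\in\C(\X_0)$, and the boundary contribution at $n=\T$ (where $X_{\T}\in\aS$ a.s.) enters (\ref{sum1R}) and (\ref{sum2R}) with opposite signs and cancels only upon summation; your computation never sees any of this. What the paper's longer route buys is the explicit description of how $\PP^{\dag}$ distributes mass between starting and non-starting symbols at coordinate zero, information not contained in the bare total-mass identity (though it is not reused later). Your closing observation --- that under $\aP_s^{\dag}$ the pair $(X_0,Z_0)$ is uniform on $\{s\}\times R_s$, cf. (\ref{inic1}), which is the only point where the cylinder formulas (\ref{caso1x})--(\ref{caso2x}) would need modification --- is also accurate.
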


\begin{proof}
We must show that condition (\ref{cnormxR}) is equivalent to 
$\PP^{\dag}(X_0\in \aS, Z_0\in R)=1$. 
Let $s_0\in \aS$. From (\ref{inic1}) we get
\begin{eqnarray*}
\PP^{\dag}(X_0=s_0, Z_0\in R)&=&\sum\limits_{s\in \aS} 
\!\pi^{\dag}_s (\!\sum\limits_{n\ge 0}
\aP_s^{\dag}(\T\!>\!n\!+\!1, X_{n+1}=s_0, Z_0\in R))\!+
\! \pi^{\dag}_{s_0}\\
&=& \sum\limits_{s\in \aS}\! 
\pi^{\dag}_s (\sum\limits_{n\ge 0}
\aP_s^{\dag}(\T\!>\!n\!+\!1, X_{n+1}\!=\!s_0))\!+\! 
\pi^{\dag}_{s_0}\\
&=&\sum\limits_{s\in \aS} \!
\pi^{\dag}_s \, \aE_s^{\dag}(\!\sum_{n=1}^\T\ind_{\{X_{n}\!=
\!s_0\}}) \!+\! \pi^{\dag}_{s_0}.
\end{eqnarray*}
Hence
\begin{equation}
\label{sum1R}
\PP^{\dag}(X_0\in \aS, Z_0\in R)=
\sum\limits_{s\in \aS} \pi^{\dag}_s 
\aE_s^{\dag}(\sum_{n=1}^\T\ind_{\{X_{n}\in \aS\}})
+\sum\limits_{s\in \aS} \pi^{\dag}_s.
\end{equation}
On the other hand 
\begin{eqnarray*}
\nonumber
\PP^{\dag}(X_0\in I \!\setminus \!\aS, Z_0\in R)&=&
\sum\limits_{s\in \aS}\!
\pi^{\dag}_s (\sum\limits_{n\ge 0} 
\aP_s^{\dag}(\T>n\!+\!1, X_{n+1}\in I \!\setminus \! \aS, 
Z_{n+1}\in R))\\
\nonumber
&{}&\;\;=\sum\limits_{s\in \aS}\!
\pi^{\dag}_s \, (\sum\limits_{n\ge 0} \!
\aP_s^{\dag}(\T>n\!+\!1, X_{n+1}\in I \!\setminus \! \aS)).
\end{eqnarray*}
We have
$$
\aP_s^{\dag}(\T>n+1, X_{n+1}\in I \!\setminus \! \aS)=
\aP_s^{\dag}(\T\!>\!n+1)-
\aP_s^{\dag}(\T\!>\!n\!+\!1, \X_{n+1}\!\in \! 
(\aS\times R)\setminus \aV)
$$
and
\begin{eqnarray*}
\aP_s^{\dag}(\T\!>\!n\!+\!1, \X_{n+1}\!\in \!
(\aS\times R)\setminus \aV)
&=&\aE_s^{\dag}(\ind_{\{\X_{n+1}\in (\aS\times R)
\setminus \aV, \T>n+1\}})\\
&=&\aE_s^{\dag}(\ind_{\{X_{n+1}\in \aS, \T >n+1\}}).
\end{eqnarray*}
Then
\begin{eqnarray*}
\nonumber
&{}&\PP^{\dag}(X_0\! \in  \!I \!\setminus \!\aS, Z_0\! \in \!R)
=\sum\limits_{s\in \aS}\! \pi_s (\sum\limits_{n\ge 0} 
\aP_s^{\dag}(\T\!>\!n+1)-\aE^{\dag}_s(\ind_{X_{n+1}\in \aS, \T>n+1})))\\
\nonumber
&{}&=\sum\limits_{s\in \aS}\! \pi^{\dag}_s
(\aE_s^{\dag}(\T)-\aP_s^{\dag}(\T<\infty)-
\aE_s^{\dag}(\sum_{n=1}^\T \ind_{\{X_n\in \aS\}})).
\end{eqnarray*}
We conclude
\begin{equation}
\label{sum2R}
\PP^{\dag}(X_0\!\in \!I \!\setminus \!\aS, Z_0\!\in \!R)=
\sum\limits_{s\in \aS}\!\!\pi^{\dag}_s
(\aE_s^{\dag}(\T)-\aE_s^{\dag}
(\sum_{n=1}^\T \ind_{\{X_n\in \aS\}}))- 
\sum\limits_{s\in \aS}\! \pi^{\dag}_s.
\end{equation}
From (\ref{sum1R}) and (\ref{sum2R}) we find,
$$
\PP^{\dag}(X_0\in \aS, Z_0\in R)=
\sum\limits_{s\in \aS}\! \pi^{\dag}_s 
\aE_s^{\dag}(\T)\,.
$$
Hence, condition (\ref{cnormxR}) is necessary and sufficient 
in order that $\PP^{\dag}$ is a probability measure on $\K^\NN$.
So (\ref{cexistxR}) is a necessary and sufficient 
condition in order that there exists some strictly positive 
vector $\pi^{\dag}$ fulfilling (\ref{cnormxR}).
\end{proof}

From now on we assume (\ref{cexistxR}) always 
hold and that $\pi^{\dag}$ satisfies (\ref{cnormxR}), 
so $\PP^{\dag}$ is a probability measure on $\K^\NN$.
Let $\EE^{\dag}$ be its associated mean expected value. 

\begin{remark}
\label{remfl2}
From (\ref{defr}) and by using (\ref{cnormxR}), we get formally
\begin{equation}
\label{formal1}
\PP^{\dag}(\T<\infty)= \sum_{s\in \aS} \pi^{\dag}_s(\sum_{n\ge 0} 
\aP^{\dag}_s(\T>n, \T<\infty))=
 \sum_{s\in \aS} \pi_s \EE^{\dag}_s(T)=1.
\end{equation}
Similar comments to those of Remark \ref{remfl1} can be made. $\Box$
\end{remark}

Let us define the family of probability measures 
$(\aP^{* \dag}_s: s\in \aS)$ on $\K^\NN$ by 
regeneration at $\T=T_{\aV\setminus \C(\X_0)}$. In this purpose
for ${\underline{\kappa}}=(\kappa_l: 
l=0,..,m)\in \aV\times \K^m$ define the sequence of indexes 
$(\tau_n({\underline{\kappa}}): n\ge 0)$ by
$$
\tau_0({\underline{\kappa}})=0 \hbox{ and } \, 
\forall n\ge 1\,: \;\;
\tau_n({\underline{\kappa}})=\inf\{l> \tau_{n-1}: 
\kappa_l\in \aV\setminus
\C(\kappa_{\tau_{n-1}({\underline{\kappa}})})\}\,.
$$
Let $\chi({\underline{\kappa}})=
\sup\{k\ge 0: \tau_{k}({\underline{\kappa}})<\infty\}$. 
From definition,
$$
\forall \, n\in \{1,..,\chi({\underline{\kappa}})\}:\;\; 
\C(\kappa_{\tau_n({\underline{\kappa}})})\neq 
\C(\kappa_{\tau_{n-1}({\underline{\kappa}})})\,.
$$
Let us simply note $\tau_k({\underline{\kappa}})$ 
by $\tau_k$, but in $\chi({\underline{\kappa}})$ 
we keep the dependence on ${\underline{\kappa}}$.
Let us define $\aP^{* \dag}_s$. For
$m\in \NN^*$, $(i_0,..,i_m)\in I^{m+1}$ 
and $R'_0,..,R'_m\in \B(R)$ we put
\begin{eqnarray}
\nonumber
&{}&\aP^{* \dag}_s(X_l\!=\!i_l, Z_l\in R'_l, l\!=\!0,..,m)\\
\nonumber
&{}&\, =
\ind_{\{i_0=s\}}P^{e_0}( R'_0 | \, s)
\!\!\!\!\prod_{k=0}^{\chi({\underline{\kappa}})-1}\!\!\!\!
\aP^{* \dag}_{i_{\tau_k}}(X_l=i_{\tau_k+l}, Z_l\!\in \!R'_{\tau_k+l}, 
l=1,..,\tau_{k+1}\!-\!\tau_k)\\
\label{caso0xr}
&{}&\;\; \times \aP^{* \dag}_{i_{\tau_{\chi({\underline{\kappa}})}}}
(X_l=i_{\tau_{\chi({\underline{\kappa}})}+l}, 
Z_l\!\in \!R'_{\tau_{\chi({\underline{\kappa}})}+l},
l=1,..,m\!-\!\tau_{\chi({\underline{\kappa}})}).
\end{eqnarray}

An inductive argument on $\chi({\underline{\kappa}})$ shows that 
$\aP^{* \dag}_s$ is well-defined by (\ref{caso0xr}).
From definition, $\aP^{* \dag}_s(X_0=s)=1$ 
for all $s\in \aS$. 

\medskip

Let us fix $\T=\T_{\aV\setminus \C(\X_0)}$. 
From definition (\ref{caso0xr}), we can
apply Borel-Cantelli lemma to get
\begin{equation}
\label{impr2}
\forall s\in \aS,\, \forall n\in \NN^*:\quad \aP^{* \dag}_s(\T^n<\infty)=1\,.
\end{equation}

We denote by $\aE^{* \dag}_s$ the mean expected value
associated with $\aP^{* \dag}_s$. Note that $\aP^\dag_s(B\cap \{T\le n\})=
\aP^{* \dag}_s(B\cap \{T\le n\})$ for all
$B\in \B_T$ and $n\in \NN$. In particular
$\aP^{* \dag}_s(T>n)=\aP^\dag_s(T>n)$, so $\aE^{* \dag}_s(T)=\aE^\dag_s(T)$.

\medskip

Similarly to Proposition \ref{regdeblem} we can state
the regeneration property.

\begin{proposition}
\label{reg3}
For all probability vector $\gamma=(\gamma_s: s\in \aS)$
and all $n\in \NN^*$
the distribution $\aP^{* \dag}_\gamma=\sum_{s\in \aS}\gamma_s \aP^{* \dag}_s$
regenerates at $\T^n$. In particular for all $s\in \aS$, $\aP^{* \dag}_s$
regenerates at $\T$.
\end{proposition}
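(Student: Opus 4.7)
The plan is to mimic the argument of Proposition \ref{regdeblem}, with the bookkeeping lifted to the enlarged space $\K^\NN=(I\times R)^\NN$ and to the stopping time $\T=T_{\aV\setminus \C(\X_0)}$. Both sides of the regeneration identity
\[
\aE^{*\dag}_\gamma\bigl(\ind_{\{\T^n<\infty\}}\,h\circ\Theta_{\T^n}\bigm|\B^\X_{\T^n}\bigr)=\ind_{\{\T^n<\infty\}}\,\aE^{*\dag}_{\X_{\T^n}}(h)
\]
are linear in the starting distribution, so it suffices to treat the extremal case $\aP^{*\dag}_\gamma=\aP^{*\dag}_s$ for a fixed $s\in\aS$. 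Using (\ref{impr2}) each $\T^n$ is $\aP^{*\dag}_s$-a.s.\ finite, so the indicators may be dropped. The recursion $\T^{n+1}=\T^n+\T\circ\Theta_{\T^n}$ together with a direct induction on $n$ then reduces the problem to regeneration at the first time $\T^1=\T$.

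For the base case, a monotone-class argument allows one to check the identity only for $h=\ind_D$ with $D$ a cylinder of the form $\{X_k=j_k,\,Z_k\in R'_k,\ k=1,\ldots,q\}$, and to test the conditional expectation against the $\pi$-system of events $A_m\in\B^\X_m$ of the form $\{X_l=i_l,\,Z_l\in R''_l,\ l=0,\ldots,m\}$ satisfying $i_0=s$, $R''_0\subseteq R_s$, $(i_l,r)\in\C(\X_0)$ for all $l<m$ and all $r\in R''_l$, and $i_m\in\aS\setminus C(s)$ with $R''_m\subseteq R_{i_m}$. On such a cylinder $A_m\subseteq\{\T=m\}$ and $\X_\T=\X_m$, so the required identity reduces to the factorization
\[
\int_{A_m}\ind_D\circ\Theta_m\,d\aP^{*\dag}_s\;=\;\aP^{*\dag}_s(A_m)\cdot\aP^{*\dag}_{i_m}(D).
\]

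This last identity is extracted directly from the product formula (\ref{caso0xr}) by a short recurrence: concatenating the prefix $(i_0,\ldots,i_m)$ with the suffix $(j_1,\ldots,j_q)$ produces a word whose associated jump-sequence $\tau_\bullet$ agrees with that of the prefix up to $\chi-1$ and then gains exactly one additional jump at $\tau_\chi=m$. The new $\chi$-factor is precisely $\aP^{*\dag}_{i_m}(X_k=j_k,\,Z_k\in R'_k,\,k=1,\ldots,q)=\aP^{*\dag}_{i_m}(D)$, so the product in (\ref{caso0xr}) for the concatenated word is the product for the prefix multiplied by $\aP^{*\dag}_{i_m}(D)$, which is the claimed factorization. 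The main technical nuisance I expect is the careful bookkeeping of the leading $P^{e_0}(\cdot\mid i_m)$ factor for the $Z$-coordinate at the regeneration site: in (\ref{caso0xr}) this uniform-on-$R_{i_m}$ weight belongs to the new block $\aP^{*\dag}_{i_m}$, and one must check its compatibility with the $Z_m$-marginal already carried by $\aP^{*\dag}_s$ at the end of the prefix (this is why the constraint $R''_m\subseteq R_{i_m}$ is imposed in the definition of $A_m$). Once this matching is verified on the $\pi$-system above, a standard monotone-class extension to general bounded $\B^\X_\infty$-measurable $h$ concludes the base case and hence, via the induction, the full statement.
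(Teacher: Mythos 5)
Your proposal follows the paper's proof essentially step for step: the same reduction by linearity to the extremal case $\aP^{* \dag}_s$, the same induction via $\T^{n+1}=\T^n+\T\circ \Theta_{\T^n}$ down to $n=1$, and the same verification of the regeneration identity on cylinder events $B_m\subseteq\{\T=m\}$ with the constraints $R''_0\subseteq R_s$ and $R''_m\subseteq R_{i_m}$, concluded by the factorization obtained from a recurrence on the product formula (\ref{caso0xr}). Your extra remarks on the monotone-class extension and on matching the $P^{e_0}(\cdot \mid i_m)$ weight at the regeneration site merely make explicit what the paper leaves implicit, so the two arguments coincide.
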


\begin{proof}
It suffices to show the statement for $\aP^{* \dag}_\gamma=\aP^{* \dag}_s$,
that is for an extremal vector $\gamma$. Also by an inductive
argument it suffices to prove the result for $n=1$, that is for $\T^1=\T$.
Since $\aP^{* \dag}_s(\T<\infty)=1$,
we must show the following equality for $j_k\in I$,  
$R'_k\in \B(R)$, $k=1,,..,q$:
\begin{eqnarray}
\nonumber
&{}& \aE^{* \dag}_s(\ind_{\{X_{k+\T}=j_k, Z_{k+\T}\in R'_k; k=1,..,q\}} 
\, | \, \B^\X_{\T})\\
\label{regdx}
&{}& = 
\aE^{* \dag}_{X_\T}(\ind_{\{X_k=j_k, Z_k\in R'_k; k=1,..,q\}}) \;\;
\aP^{* \dag}_s-\hbox{a.s.}.
\end{eqnarray}
Let $i_l\in I, R''_l\in \B(R)$, $l=0,..,m$, be such that $i_0=s$,
$i_l\in C(s)$ for $l=1,..,m-1$ and $i_m\not\in C(s)$; and
$R''_0\subseteq R_s$, $R''_m\subseteq R_{i_m}$.
Let $B_m=\{\T=m, X_l=i_l, Z_l\in R''_l, l=0,...,m\}$.
Then, (\ref{regdx}) will be shown once we prove the equality
$$
\int_{B_m}\!\!\!\!\! \ind_{\{X_{k+m}=j_k, Z_{k+m}\in R'_k; 
k=1,..,q\}} d\aP^{* \dag}_s=
\int_{B_m} \!\!\!\!\! \aP^*_{i_m}(X_k=j_k, Z_{k}\in R'_k; 
k=1,..,q)d\aP^{* \dag}_s\,.
$$
This follows from a recurrence argument on (\ref{caso0xr}).
\end{proof}

\medskip

We define $\PP^{* \dag}$ for the family 
$(\aP^{* \dag}_s: s\in \aS)$
simply by putting  $\aE^{* \dag}_s$  instead of $\aE^{\dag}_s$
in  Definition \ref{defr}. Since
$\aE^{* \dag}_s(\T)=\aE^{\dag}_s(\T)$ the condition
(\ref{cexistxR}) supplied by Theorem \ref{cprobxR} in 
order that $\PP^{\dag}$
is a probability measure is the same as for $\PP^{* \dag}$,
that is the vector $\pi^{\dag}$ must satisfy (\ref{cnormxR}).
Let $\EE^{* \dag}$ be the mean expected value associated
with $\PP^{* \dag}$. 

\medskip

Assume $\X_0\in \aV$. Let $\gamma=(\gamma_s: s\in \aS)$ 
be a probability vector 
on $\aS$ and let $\K^\NN$ be endowed with the distribution 
$\aP^{* \dag}=\sum_{s\in \aS} \gamma_s \aP^{* \dag}_s$,
so $\aP^{* \dag}(X_0\in \aS)=1$. By
(\ref{impr2}) the times $(\T^n: n\in \NN^*)$ are finite $\aP^{* \dag}-$a.s.. 
Define the sequence $(\Xi_n: n\ge 0)$ by $\Xi_0=X_0$ and $\Xi_n=X_{\T^n}$
for $n\ge 1$. Proposition \ref{reg3} implies that the sequence
$(\Xi_n: n\in \NN)$ is a Markov chain. The transition matrix
$Q^{\dag}=(q^{\dag}_{s s'}: s,s'\in \aS)$ of this chain is given by
$$
\forall s,s'\in \aS:\quad 
q^{\dag}_{s s'}=\aP^{* \dag}_s(X_{\T}=s').
$$
By definition of $\T$ we have $C(\Xi_{k+1})\neq C(\Xi_k)$, 
so $q^{\dag}_{s s'}>0$ implies  $C(s')\neq C(s)$. 

\medskip

A positive vector $(\rho=(\rho_s: s\in \aS)$ is invariant for $Q^{\dag}$
if it verifies the set of equalities
$$
\forall s\in \aS:\;\;
\rho_s= \sum\limits_{s'\in \aS} \rho_{s'} q^{\dag}_{s' s}.
$$
There always exist invariant positive vectors. 
Moreover, if $Q^{\dag}$ is irreducible the invariant positive vectors
are unique up to a multiplicative constant.
In a similar way as we did in Theorem \ref{statx} we can state 
the following condition for stationarity of $\PP^{* \dag}$.

\begin{theorem}
\label{statxR}
Assume that the strictly positive vector
$\pi^{\dag}=(\pi^{\dag}_s: s\in \aS)$ satisfies the condition
(\ref{cnormxR}). Then, $\PP^{* \dag}$ is stationary 
if and only if $\pi^{\dag}$ is an invariant vector for 
$Q^{\dag}$, that is it satisfies
\begin{equation}
\label{primxR3}
\forall s\in \aS:\;\;
\pi^{\dag}_s= \sum\limits_{s'\in \aS} \pi^{\dag}_{s'} q^{\dag}_{s' s}
\, \hbox{ where } q^{\dag}_{s' s}=\aP^{\dag}_s(X_{\T}=s').
\end{equation}
\end{theorem}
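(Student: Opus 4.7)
My plan is to mimic the proof of Theorem \ref{statx} step for step, replacing the state space $I$ with $\K = I \times R$, the set $\aS$ with $\aV$, the stopping time $T$ with $\T$, the transition matrix $Q$ with $Q^{\dag}$, and the regeneration property from Proposition \ref{regdeblem} with its analog Proposition \ref{reg3}. The test cylinders become
$$
G = \{X_l = i_l,\, Z_l \in R'_l,\, l=0,\ldots,m\},
$$
and stationarity reduces, by an inductive argument as in (\ref{stat2x}), to checking the one-step invariance
$$
\PP^{*\dag}(G) = \sum_{j\in I}\PP^{*\dag}(X_0=j,\, G\circ \Theta_1^{-1}).
$$

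First I would write out the two-case decomposition analogous to (\ref{caso1x})--(\ref{caso2x}), that is, formulas for $\PP^{*\dag}(G)$ in the two cases: (i) $(i_0, R'_0) \cap \aV = \emptyset$, which covers both $i_0 \in I \setminus \aS$ and $i_0 \in \aS$ with $R'_0 \cap R_{i_0} = \emptyset$; and (ii) $\{i_0\}\times R'_0 \subseteq \aV$, which forces $i_0\in \aS$ and $R'_0 \subseteq R_{i_0}$. By linearity of the Borel measure in $R'_0$, splitting an arbitrary $R'_0$ as $(R'_0\cap R_{i_0}) \sqcup (R'_0\setminus R_{i_0})$ reduces the general cylinder to these two pure types. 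In case (i), the same computation that produced (\ref{sxq}) and (\ref{pfi11}) in the original proof carries over verbatim and shows the stationarity identity holds without any condition on $\pi^{\dag}$; the only difference is that the hit-time $T$ is replaced by $\T$ and the integration against $Z_{n+1}$ appears trivially via $P^{\vec e}$.

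In case (ii) the argument proceeds exactly as (\ref{ig1})--(\ref{ig3}): decompose the sum $\sum_{j\in I}\PP^{*\dag}(X_0=j, G\circ\Theta_1^{-1})$ into contributions from $s\in C(i_0)$ (here the event $\{\T > n+1\}$ governs) and $s\in \aS\setminus C(i_0)$ (here the event $\{\T = n+1\}$ governs). Apply Proposition \ref{reg3} to factor the second contribution through $\aP^{*\dag}_{i_0}(G)$, using $\aP^{*\dag}_s(\T<\infty)=1$ and the definition (\ref{caso0xr}):
$$
\sum_{s\in \aS\setminus C(i_0)} \pi^{\dag}_s\sum_{n\ge 0}\aP^{*\dag}_s(\T=n+1,\, G\circ \Theta_{n+1}^{-1}) = \Bigl(\sum_{s\in \aS\setminus C(i_0)} \pi^{\dag}_s q^{\dag}_{s\,i_0}\Bigr)\aP^{*\dag}_{i_0}(G).
$$
Comparing with the analog of (\ref{ig3}) for $\PP^{*\dag}(G)$ yields, after the surviving terms cancel, the single scalar condition $\pi^{\dag}_{i_0} = \sum_{s\in \aS\setminus C(i_0)} \pi^{\dag}_s q^{\dag}_{s\,i_0}$. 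Since $q^{\dag}_{s s'} = 0$ whenever $C(s)=C(s')$, this is equivalent to (\ref{primxR3}), which establishes both directions.

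The main obstacle I expect is a notational/bookkeeping one rather than a conceptual one: one needs to keep straight, at each step of the case-(ii) computation, that the decomposition (\ref{caso0xr}) in the $\K$-valued setting correctly absorbs the contribution of the $Z$-coordinates through the $P^{e_0}(\cdot\mid s)$ and $P^{\vec e}$ factors, so that the regenerative factorization at $\T$ really does split $\PP^{*\dag}(G\circ \Theta_{n+1}^{-1})$ as a product of a term depending only on $X_\T$ and the future cylinder probability $\aP^{*\dag}_{i_0}(G)$. Once this factorization is made explicit (which is exactly what Proposition \ref{reg3} provides), the algebraic cancellations that produced (\ref{prim3x}) in Theorem \ref{statx} carry through unchanged to give (\ref{primxR3}).
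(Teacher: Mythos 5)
Your proposal is correct and follows essentially the same route as the paper's own proof: the paper likewise reduces to the two pure cylinder types ($\{i_0\}\times R'_0$ disjoint from $\aV$, where stationarity holds with no condition, versus $i_0\in\aS$ with $R'_0\subseteq R_{i_0}$) and uses Proposition \ref{reg3} to factor the $\{\T=n+1\}$ contribution through $\aP^{*\dag}_{i_0}(G)$, yielding exactly the condition $\pi^{\dag}_{i_0}=\sum_{s\in\aS}\pi^{\dag}_s q^{\dag}_{s\,i_0}$. The only difference is organizational: the paper first assembles the global identity (\ref{tri6}) for $\PP^{*\dag}(G\circ\Theta_1^{-1})$ via a three-way split on $(X_0,Z_0)$ before specializing to the cylinder cases, whereas you perform the case split on the cylinder first, with the same cancellations.
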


\begin{proof}
We have that $\PP^{* \dag}$ is stationary if for all
$m\in \NN$ and all $i_l\in I$, $R'_l\in \B(R)$, $l=0,..,m$, it is satisfied
\begin{equation}
\label{stattxR}
\PP^{* \dag}(X_{l}\!=\!i_l, Z_l\!\in \! R'_l, l\!=\!0,..,m)=
\PP^{* \dag}(X_{l+1}\!=\!i_l, Z_{l+1}\!\in \! R'_l, l\!=\!0,..,m).
\end{equation}
From now on we denote 
$$
G=\{X_{l}\!=\!i_l, Z_l\!\in \! R'_l, l\!=\!0,..,m\}
$$
and the $n-$shifted set 
$$
G\circ \Theta_n^{-1}=\{X_{l+n}\!=\!i_l, Z_{l+n}\!\in \! R'_l, l\!=\!0,..,m\}.
$$
So, the relation (\ref{stattxR}) that we want to show is
$\PP^{* \dag}(G)=\PP^{* \dag}(G\circ \Theta_1^{-1})$. We have
\begin{eqnarray*}
\PP^{* \dag}(G\circ \Theta_1^{-1})
&=&\PP^{* \dag}(X_0\!\in \! I\!\setminus \!\aS, G\circ \Theta_1^{-1})
+\PP^{* \dag}(X_0\!\in \! \aS, Z_0\!\not\in \!R_{X_0}, G\circ \Theta_1^{-1})\\
&{}&\;\,+ \PP^{* \dag}(X_0\!\in \! \aS, Z_0\!\in \! 
R_{X_0}, G\circ \Theta_1^{-1}).
\end{eqnarray*}
Now
\begin{equation}
\nonumber
\PP^{* \dag}(X_0\!\in \!I \! \setminus \! \aS, G\circ \Theta_1^{-1})\\
=\sum\limits_{s\in \aS}\! \pi^{\dag}_s (\!\sum\limits_{n\ge 0}
\! \aP^{* \dag}_s(\T\!>\!n\!+\!1,
X_{n+1}\!\in \!I\!\setminus \!\aS, G\circ \Theta_{n+2}^{-1})
\end{equation}
and
\begin{eqnarray*}
\nonumber\!
&{}&\PP^{* \dag}
(X_0\!\in \! \aS, Z_0\!\not\in \!R_{X_0}, G\circ \Theta_1^{-1})\\
&{}&\, =\sum\limits_{s\in \aS}\! \pi^{\dag}_s (\!\sum\limits_{n\ge 0}\!
\aP^{* \dag}_s(\T\!>\!n\!+\!1, X_{n+1}\!\in \! \aS, Z_{n+1}\! \not\in \!R_{X_{n+1}},
G\circ \Theta_{n+2}^{-1})\\
&{}&\, =\sum\limits_{s\in \aS}\! \pi^{\dag}_s (\!\sum\limits_{n\ge 0}\!
\aP^{* \dag}_s(\T\!>\!n\!+\!1, X_{n+1}\!\in \! C(s), Z_{n+1}\! \not\in \!R_{X_{n+1}}, 
G\circ \Theta_{n+2}^{-1})\\
&{}&\;\, +\sum\limits_{s\in \aS}\! \pi^{\dag}_s (\!\sum\limits_{n\ge 0}\!
\aP^{* \dag}_s(\T\!>\!n\!+\!1, X_{n+1}\!\in \! \aS\!\setminus \!C(s),
Z_{n+1}\! \not\in \!R_{X_{n+1}},G\circ \Theta_{n+2}^{-1})). 
\end{eqnarray*}
Now, by using $\aP_s^{* \dag}(X_0\!=\!s, Z_0\!\in \! R_{X_0})=1$ we find
\begin{eqnarray}
\nonumber
&{}& \PP^{* \dag}(X_0\!\in \! \aS, Z_0\!\in \! R_{X_0}, G\circ \Theta_1^{-1})\\
\nonumber
&{}&\, =\sum\limits_{s\in \aS}\!\pi^{\dag}_s (\!\sum\limits_{n\ge 0}\!
\aP_s^{* \dag}(\T\!>\!n\!+\!1, X_{n+1}\!\in \! C(s), Z_{n+1}\!\in \! R_{X_{n+1}}, 
G\circ \Theta_{n+2}^{-1}))\\
\nonumber
&{}&\;\;+\sum\limits_{s\in \aS}\!\pi^{\dag}_s (\!\sum\limits_{n\ge 0}\!
\aP_s^{* \dag}(\T\!=\!n\!+\!1, X_{n+1}\!\in \! \aS\!\setminus \!C(s), 
Z_{n+1}\!\in \! R_{X_{n+1}}, G\circ \Theta_{n+2}^{-1}))\\
\nonumber
&{}&\;\;+\sum\limits_{s\in \aS}\pi^{\dag}_s \aP_s^{* \dag}(G\circ \Theta_1^{-1}). 
\end{eqnarray}
On the other hand we have
\begin{eqnarray}
\nonumber
&{}& \aP^{* \dag}_s(\T\!>\!n\!+\!1, G\circ \Theta_{n+2}^{-1})\\
\nonumber
&{}&=\aP^{* \dag}_s(\T\!>\!n\!+\!1,
X_{n+1}\!\in \!I\!\setminus \!\aS, G\circ \Theta_{n+2}^{-1})
+\aP^{* \dag}_s(\T\!>\!n\!+\!1,
X_{n+1}\!\in \! C(s), G\circ \Theta_{n+2}^{-1})\\
\nonumber
&{}& \;\;\; +\aP^{* \dag}_s(\T\!>\!n\!+\!1,
X_{n+1}\!\in \! \aS\!\setminus \!C(s), Z_{n+1}\!\not\in \!R_{X_{n+1}}, 
G\circ \Theta_{n+2}^{-1}).
\end{eqnarray}
We put the previous elements together to get
\begin{eqnarray}
\nonumber
\PP^{* \dag}(G\circ \Theta_1^{-1})
&=&\sum\limits_{s\in \aS}\!\pi^{\dag}_s (\!\sum\limits_{n\ge 0}
\aP^{* \dag}_s(\T\!>\!n\!+\!1,G\circ \Theta_{n+2}^{-1}))+
\sum\limits_{s\in \aS}\pi^{\dag}_s \aP_s^{* \dag}(G\circ \Theta_1^{-1})\\
\nonumber
&{}&\; +
\sum\limits_{s\in \aS}\!\pi^{\dag}_s (\sum\limits_{n\ge 0}
\aP_s^{* \dag}(\T\!=\!n\!+\!1, X_{n+1}\!\in \! \aS\!\setminus \!C(s),
Z_{n+1}\!\in \! R_{X_{n+1}}, G\circ \Theta_{n+2}^{-1}).
\end{eqnarray}
Hence
\begin{eqnarray}
\nonumber
\PP^{* \dag}(G\circ \Theta_1^{-1})
&=&\sum\limits_{s\in \aS}\pi^{\dag}_s (\sum\limits_{n\ge 0}
\aP^{* \dag}_s(\T\!>\!n\!+\!1,G\circ \Theta_{n+1}^{-1}))\\
\label{tri6}
&{}& \; +
\sum\limits_{s\in \aS}(\!\sum\limits_{n\ge 0}\pi^{\dag}_s 
\aP_s^{* \dag}(\T\!= \!n\!+\!1, G\circ \Theta_{n+1}^{-1})).
\end{eqnarray}
Recall (\ref{eqx1}), 
$$
\PP^{* \dag}(G)
=\sum\limits_{s\in \aS}\pi^{\dag}_s (\sum\limits_{n\ge 0}
\aP^{* \dag}_s(\T\!\ge\!n,G\circ \Theta_{n}^{-1})), 
$$
and $G=\{X_{l}\!=\!i_l, Z_l\!\in \! R'_l, l\!=\!0,..,m\}$. 
In both cases: $i_0\in I\!\setminus \!\aS$, or $i_0\in \aS$ and 
$R'_0\subseteq R\setminus R_{i_0}$; we get
$\aP^{* \dag}_s(G)=0$ and $\aP_s^{* \dag}
(\T\!= \!n\!+\!1, G\circ \Theta_{n+1}^{-1})=0$ for all $n\ge 0$. 
Then, in these cases, the stationarity property 
$\, \PP^{* \dag}(G)=\PP^{* \dag}(G\circ \Theta_1^{-1})$,
is an straightforward consequence of formulae (\ref{tri6}) and
(\ref{eqx1}).

\medskip 

We are left to study the case  $i_0\in \aS$ and $R'_0\subseteq  R_{i_0}$.
In this case 
$$
\sum\limits_{s\in \aS}\!\pi^{\dag}_s \aP^{* \dag}_s(G)=
\pi^{\dag}_{i_0}\frac{|R'_0|}{|R_{i_0}|}
\aP^{* \dag}_{i_0}(X_l\!=\!i_l, Z_l\!\in \! R'_l, l\!=\!1,..,m).
$$
On the other hand, since $\aP^{* \dag}_s$ is 
defined by using the regeneration property (\ref{caso0xr}) we get
\begin {eqnarray*}
&{}& \sum\limits_{s\in \aS}(\sum\limits_{n\ge 0}\aP_s^{* \dag}
\left(\T\!= \!n\!+\!1, G\circ \Theta_{n+1}^{-1}\right))\\
&{}&=(\sum\limits_{n\ge 0}\aP_s^{* \dag}(\T\!= \!n\!+\!1, X_{\T}=i_0, 
R_{\T}\in R'_0))\cdot
\aP^{* \dag}_{i_0}(X_{l}\!=\!i_l, Z_l\!\in \! R'_l, l\!=\!1,..,m).
\end{eqnarray*}
We have
$$
\sum\limits_{s\in \aS}\!\pi^{\dag}_s (\sum\limits_{n\ge 0}
\aP_{s}^{* \dag}(\T\!= \!n\!+\!1, X_{\T}=i_0,R_{\T}\in R'_0))=
\frac{|R'_0|}{|R_{i_0}|}(\sum\limits_{s\in \aS}\!\pi^{\dag}_s 
\PP^{* \dag}(X_{\T}=i_0)).
$$
Therefore, from  (\ref{tri6}) and (\ref{eqx1}) we obtain the equivalence
$$
\left(\PP^{* \dag}(G)=\PP^{* \dag}(G\circ \Theta_1^{-1})\right)\Leftrightarrow
(\forall i_0\in \aS: \;
\pi^{\dag}_{i_0}=\sum\limits_{s\in \aS}\!\pi^{\dag}_s 
\PP^{* \dag}(X_{\T}=i_0)).
$$
We have proven that
$\PP^*$ is stationary if and only if the following condition
is satisfied
$$
\forall s'\in \aS:\quad \pi^{\dag}_{s'}=
\sum\limits_{s\in \aS}\pi^{\dag}_s 
\aP^*_s (X_T=s')=\sum\limits_{s\in \aS}\pi^{\dag}_s q_{s s'}\,.
$$
This shows the theorem.
\end{proof}

Similarly as in we did in (\ref{biinf1}), when $\PP^*$ is stationary 
we can extend it to the set $\K^\ZZ$ by putting
\begin{equation}
\label{biinf2}
\PP^*(X_{l+k}=i_k, Z_{l+k}\in R'_k, k=0,..,m)=\PP^*(X_k=i_k, Z_k\in R'_k, k=0,..,m)
\end{equation}
for all $l\in \ZZ$, $m\ge 0$; $i_k\in I$, $R'_k\in \B(R)$ for $k=0,..,m$. 

\medskip

Now we state the equivalent of Theorem \ref{thmren1}
in Section \ref{secren1}. Define the probability vector
$$
\wpi^{\dag}=(\wpi^{\dag}_s: s\in \aS)
\hbox{ with } \wpi^{\dag}_s=\pi^{\dag}_s\, (\sum_{s'\in \aS}\pi^{\dag}_{s'})^{-1}.
$$
Consider the distribution  
$\aP^{\dag}_{\wpi^{\dag}}=\sum_{s\in \aS}\wpi^{\dag}_s \aP^{\dag}_s$ 
on $\B^\X_\infty$ and let $\aE^{\dag}_\wpi$ be its mean expected value.
From (\ref{impr2}) we have $\aP^{\dag}_{\wpi^{\dag}}(\T^n<\infty)=1$
for all $n\in \NN^*$, where $\T=T_{\aV\setminus \C(\X_0)}$. 
By condition (\ref{cnormxR}) we also find
$$
\aE^{\dag}_{\wpi^{\dag}}(\T)=
(\sum_{s\in \aS}\pi^{\dag}_s)^{-1}(\sum_{s\in \aS}\pi^{\dag}_s 
\aE^{\dag}_s(\T))=(\sum_{s\in \aS}\pi^{\dag}_s)^{-1}\,.
$$
Let $\aP^{* \dag}_{\wpi^{\dag}}$ be given by 
$\aP^{* \dag}_{\wpi^{\dag}}=\sum_{s\in \aS}\wpi_s 
\aP^{* \dag}_s$ on $\B^\X_\infty$ and let $\aE^{* \dag}_{\wpi^{\dag}}$ 
be its mean expected value. By previous relations, 
$$
\forall n\in \NN^*\quad
\aP^{* \dag}_{\wpi^{\dag}}(\T<\infty)=1 \hbox{ and } 
\aE^{* \dag}_{\wpi^{\dag}}(\T)^{-1}=\sum_{s\in \aS} \pi^{\dag}_s\,.
$$

The following result is proven in a similar way as we did 
for Theorem \ref{thmren1}. In fact, it is a corollary of 
Theorem \ref{statxR} because this last result allows us to construct 
the process $\X$ with distribution $\aP^{* \dag}_{\wpi^{\dag}}$ 
with independent copies between the 
sequence of hitting times of different classes. 
Since the increments of this sequence of times are 
independent identically distributed 
variables and its distribution has a finite mean 
the renewal theorem can be applied as in the proof of Theorem 
\ref{thmren1} and also the other arguments in this proof
work in an entirely analogous way. Therefore we can state:

\begin{theorem}
\label{thmren2}
Assume that $\pi^{\dag}$ satisfies (\ref{cnormxR}) and (\ref{primxR3})
and that the distribution 
of $\T$ under $ \aP^{* \dag}_{\wpi^{\dag}}$ is aperiodic.
Then,
$$
\forall B\in \B^\X_\infty:\quad
\PP^{* \dag}(B)=
\lim_{N\to \infty} \aP^{* \dag}_{\wpi^{\dag}}(B\circ \Theta_N^{-1}).
$$
Moreover, if in addition the matrix $Q^{\dag}$ is aperiodic then for 
all probability vector $\gamma=(\gamma_s: s\in \aS)$ the probability
measure $\aP^{* \dag}_\gamma=\sum_{s\in \aS} \gamma_s \aP^{* \dag}_s$ 
satisfies
$$
\forall B\in \B^\X_\infty:\quad
\PP^{* \dag}(B)=\lim_{N\to \infty}\aP_\gamma^{* \dag}(B\circ \Theta_N^{-1}).
$$
$\Box$
\end{theorem}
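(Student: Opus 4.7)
The plan is to mirror the proof of Theorem \ref{thmren1}, using the regenerative structure of $\aP^{* \dag}_{\wpi^{\dag}}$ at the hitting times $(\T^n:n\in \NN^*)$ of different classes, which is available from Proposition \ref{reg3}. First I would verify that under $\aP^{* \dag}_{\wpi^{\dag}}$ the variable $X_{\T}$ is distributed as $\wpi^{\dag}$: since $\wpi^{\dag}$ is invariant for $Q^{\dag}$ by condition (\ref{primxR3}) and $\aP^{* \dag}_{\wpi^{\dag}}(X_\T=s')=(\wpi^{\dag})'Q^{\dag}_{\cdot s'}$, an inductive argument then shows $X_{\T^n}$ has law $\wpi^{\dag}$ for every $n\in \NN^*$. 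Consequently, the successive blocks between the $\T^n$ are i.i.d.\ copies distributed as the initial block under $\aP^{* \dag}_{\wpi^{\dag}}$, and the increments $(\T^{n+1}-\T^n)$ are i.i.d.\ with common distribution that of $\T$ under $\aP^{* \dag}_{\wpi^{\dag}}$ and finite mean $\aE^{* \dag}_{\wpi^{\dag}}(\T)^{-1}=\sum_{s\in \aS}\pi^{\dag}_s$.

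Next I would introduce the renewal construction $\widehat\X$ exactly as in Theorem \ref{thmren1}: concatenate independent copies of blocks $\X^{(m)}$ distributed as $(\X_n:n\le \T)$ under $\aP^{* \dag}_{\wpi^{\dag}}$, obtain hitting times $\widehat\T^n=\sum_{k=1}^n \T^{[k]}$, and define the age variable $\beta_N=\sup\{\widehat\T^n: \widehat\T^n\le N\}$. Aperiodicity of $\T$ under $\aP^{* \dag}_{\wpi^{\dag}}$ lets me invoke the renewal theorem to conclude that $\aP^{* \dag}_{\wpi^{\dag}}(\beta_N=N-l)\to (\sum_s\pi^{\dag}_s)\aP^{* \dag}_{\wpi^{\dag}}(\T>l)$ and $\aP^{* \dag}_{\wpi^{\dag}}(N\in\{\widehat\T^n\})\to \sum_s\pi^{\dag}_s$. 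Then, decomposing $\aP^{* \dag}_{\wpi^{\dag}}(B\circ \Theta_N^{-1})$ on the value of $\beta_N$ and using the regeneration identity (\ref{regdx}) to separate the factor starting at the last renewal from the term $\aP^{* \dag}_{\wpi^{\dag}}(\T>l, B\circ \Theta_l^{-1})=\sum_s \wpi^{\dag}_s \aP^{* \dag}_s(\T>l, B\circ \Theta_l^{-1})$, I recover, in the limit, the expansion (\ref{eqx1}) defining $\PP^{* \dag}(B)$. It suffices to check this on a generating class of cylinders $B=\{X_k=i_k, Z_k\in R'_k, k=0,\dots,m\}$ and then extend by a monotone class argument.

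For the second assertion, starting from $\aP^{* \dag}_\gamma$ I would replicate the $\eta_N$-argument used to prove (\ref{eqw3}). The regeneration property of $\aP^{* \dag}_\gamma$ at each $\T^n$, combined with the fact that $X_{\T^n}$ has distribution $\gamma(n)'=\gamma' (Q^{\dag})^n$, and aperiodicity of $Q^{\dag}$, gives $\gamma(n)\to \wpi^{\dag}$. Introducing $\eta_N=\sup\{n:\T^n\le N\}$ and controlling $\aP^{* \dag}_\gamma(\eta_N\le \tilde r)$ through the finiteness of $\aE^{* \dag}_s(\T)$ for every $s$, I would show that the relevant conditional expectations under $\aP^{* \dag}_\gamma$ differ from those under $\aP^{* \dag}_{\wpi^{\dag}}$ by at most $\sum_s|\gamma(\eta_N)_s-\wpi^{\dag}_s|$ in total variation, which tends to zero. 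A telescoping of the renewal decomposition then transfers the limit from $\aP^{* \dag}_{\wpi^{\dag}}$ to $\aP^{* \dag}_\gamma$.

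The main obstacle I anticipate is bookkeeping the role of the auxiliary random coordinates $Z_n$. Unlike in Theorem \ref{thmren1}, the regeneration is now only triggered when both $C(X_{\T})\neq C(\X_0)$ and $Z_\T\in R_{X_\T}$ occur, so the distribution of $\T$ and the law of the block between renewals mix the codon dynamics with the independent uniform selections. However, once the regeneration identity of Proposition \ref{reg3} is applied with cylinders in both coordinates, the renewal argument and the convergence of $\gamma(n)\to \wpi^{\dag}$ go through verbatim, as announced by the author in the text preceding the theorem.
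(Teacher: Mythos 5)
Your proposal is correct and takes essentially the same route as the paper: the paper's own proof of Theorem \ref{thmren2} consists precisely of the observation that the renewal construction, the application of the renewal theorem, and the $\eta_N$/$\gamma(n)\to\wpi^{\dag}$ arguments from Theorem \ref{thmren1} carry over verbatim, given the regeneration at $(\T^n:n\in\NN^*)$ supplied by Proposition \ref{reg3} and the invariance of $\wpi^{\dag}$ for $Q^{\dag}$. Your added remark that the only new bookkeeping concerns the auxiliary coordinates $Z_n$ entering the definition of $\T$, and that this is absorbed once the regeneration identity (\ref{regdx}) is applied to cylinders in both coordinates, is exactly the point the author relies on implicitly.
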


Let $\PP^{* \dag}$ the be law defined on $\K^\ZZ$ in (\ref{biinf2}). 
We can also show that for all $k\ge 0$, $i_k\in I$, $R'_k\in \B(R)$, $k=0,..,m$, 
we have
\begin{eqnarray*}
&{}& 
\PP^{* \dag}(X_{l+k}=i_k, Z_{l+k}\in R'_k, k=0,..,m)\\
&{}& \; =
\lim\limits_{N\to \infty} \aP_\wpi^{* \dag}(X_{l+k+N}=i_k, 
Z_{l+k+N}\in R'_k, k=0,..,m).
\end{eqnarray*}
Therefore, 
$\T^0=\sup\{\T^n: \T\le 0\}$ is finite
$\PP^{* \dag}-$a.s. and $X_{\T_0}$ has distribution $\wpi$.
Then,
\begin{eqnarray}
\nonumber
&{}& \PP^{* \dag}(X_k=i_k, Z_k\in R'_k, k=0,..,m)=\sum_{n\in \NN}
\PP^{* \dag}(\T^0=-n, X_0=i_k, k=0,..,m)\\
\label{eq22}
&{}&\; =\sum_{n\in \NN} \PP^{* \dag}(X_0=i_k, Z_k\in R'_k, k=0,..,m \, | \, \T^0=-n) 
\PP^{* \dag}(\T^0=-n).
\end{eqnarray}
Since $\PP^{* \dag}$ regenerates at each $\T^n$ with law $\wpi^\dag$, we have
$$
\PP^{* \dag}(X_0=i_k, k=0,..,m \, | \, \T^0=-n)=
\aP_{\wpi^\dag}(X_n=i_{k+n}, k=0,..,m \, | \, \T>n).
$$ 
Similarly to (\ref{eq14}) we have
$\aP^{* \dag}_{\wpi^\dag}(\T^0=-n)=
(\sum_{s\in \aS}\pi_s)\aP^*_{\wpi^\dag}(\T>n)$.
Thus, we retrieve the definition in (\ref{defr}),
$$
\PP^{* \dag}(X_0=i_k, k=0,..,m)=\sum_{s\in \aS} \sum_{n\in \NN} \pi_s 
\aP^{* \dag}_s(X_n=i_{k+n}, k=0,..,m;  \T>n).
$$
Hence, from (\ref{eq22}) we get a probabilistic 
insight to definition $\PP^{* \dag}$ and a good definition 
of $\T$ under law $\PP^{* \dag}$,
as claimed in Remark \ref{remfl2}. 

\bigskip

\begin{remark}
We have found conditions in order that $\PP^*$ or $\PP^{* \dag}$ are
stationary laws. The ergodic description of 
theses measures is part of an on-going study of the author.
\end{remark}

\section*{Acknowledgments} 

The author thanks the Center for Mathematical Modeling  (CMM) 
Basal CONICYT Program PFB 03 and INRIA-CHILE program CIRIC 
for supporting this work. He is indebted 
to Andrew Hart for fruitful discussions.

\bigskip

\noindent SERVET MART\'INEZ

\noindent {\it Departamento Ingenier{\'\i}a Matem\'atica and Centro
Modelamiento Matem\'atico, Universidad de Chile,
UMI 2807 CNRS, Casilla 170-3, Correo 3, Santiago, Chile.}
e-mail: smartine@dim.uchile.cl

\label{lastpage}

\end{document}